\numberwithin{equation}{section}
\theoremstyle{plain}
\newtheorem{theorem}{Theorem}[section]
\newtheorem{lemma}[theorem]{Lemma}
\newtheorem{proposition}[theorem]{Proposition}
 \theoremstyle{definition}
\newtheorem{definition}[theorem]{Definition}
\newtheorem{remark}[theorem]{Remark}
\newtheorem{?}[theorem]{Problem}
\newtheorem{example}[theorem]{Example}
\begin{document}


\title[Quantum chaos]{Disjoint data inverse problem on manifolds with quantum chaos bounds}


\author[] {Matti Lassas}
\address {Department of Mathematics and Statistics, University of Helsinki, Helsinki, Finland}
\email{matti.Lassas@helsinki.fi}

\author[]{Medet Nursultanov}
\address {Department of Mathematics and Statistics, University of Helsinki, Helsinki, Finland}
\email{medet.nursultanov@gmail.com}

\author[]{Lauri Oksanen}
\address {Department of Mathematics and Statistics, University of Helsinki, Helsinki, Finland}
\email{lauri.oksanen@helsinki.fi}

\author[]{Lauri Ylinen,}
\address {Department of Mathematics and Statistics, University of Helsinki, Helsinki, Finland}
\email{lauri.ylinen@helsinki.fi}

 \subjclass[2010]{Primary: 	35R30  Secondary: 35R01 }

 \keywords{Inverse problems, Riemannian wave equation, Source-to-solution map, Disjoint data, Quantum chaos}

\begin{abstract}
	We consider the inverse problem to determine a smooth compact Riemannian manifold $(M,g)$ from a restriction of the source-to-solution operator, $\Lambda_{\mathcal{S,R}}$, for the wave equation on the manifold. Here, $\mathcal{S}$ and $\mathcal{R}$ are open sets on $M$, and $\Lambda_{\mathcal{S,R}}$ represents the measurements of waves produced by smooth sources supported on $\mathcal{S}$ and observed on $\mathcal{R}$. We emphasise that $\overline{\mathcal{S}}$ and $\overline{\mathcal{R}}$ could be disjoint. We demonstrate that $\Lambda_{\mathcal{S,R}}$ determines the manifold $(M,g)$ uniquely under the following spectral bound condition for the set $\mathcal{S}$: There exists a constant $C>0$ such that any normalized eigenfunction $\phi_k$ of the Laplace-Beltrami operator on $(M,g)$ satisfies
    \begin{equation*}
        1\leq C\|\phi_k\|_{L^2(\mathcal{S})}.
    \end{equation*}
    We note that, for the Anosov surface, this spectral bound condition is fulfilled for any non-empty open subset $\mathcal{S}$. Our approach is based on paper \cite{LassasOksanen2014} and the spectral bound condition above is an analogue of the Hassell-Tao condition there.
\end{abstract}

\maketitle



\tableofcontents

\section{Introduction}

Let $(M,g)$ be a smooth, connected and compact Riemannian manifold. Denote by $\Delta_g$ the Laplace-Beltrami operator on $M$.  We consider an inverse problem corresponding to the wave equation
\begin{equation}\label{wave eq}
	\begin{cases}
	(\partial_t^2 - \Delta_g) u (t,x) = f(t,x), & \text{in } (0,\infty)\times M; \\
	u\left.\right|_{t=0}= \partial_t u\left.\right|_{t=0} =0, & \text{in } M.
\end{cases}	
\end{equation}
We denote its solution by $u^f=u(t,x)$. For open and non-empty sets $\mathcal{R}$, $\mathcal{S}\subset M$, we define the restricted source-to-solution operator, 
\begin{equation*}\label{s_t_s_map}
	\Lambda_{\mathcal{S,R}} : f \mapsto u^f\left.\right|_{(0,\infty)\times \mathcal{R}},
	\qquad
	f\in C_0^\infty((0,\infty)\times \mathcal{S}).
\end{equation*}
The operator $\Lambda_{\mathcal{S,R}}$ models measurements for the wave equation with sources $f$ producing the wave on $(0,\infty)\times \mathcal{S}$ and the waves $u^f$ being observed on $(0,\infty)\times \mathcal{R}$. We investigate the inverse problem to determine $(M,g)$ from operator $\Lambda_{\mathcal{S,R}}$.

In the realm of the manifold with a boundary, analogous inverse problems are examined where $\mathcal{S}$ and $\mathcal{R}$ comprise portions of the boundary. These problems are commonly referred to as complete or partial boundary data problems and have been extensively investigated. Especially, the case when $\mathcal{S}=\mathcal{R}$ has been studied broadly, see \cite{Krein1951,Blagovestchenskii1969,Blagovestchenskii1971,Belishev1987,BelishevKurylev1992,LassasUhlmann2001} and references therein. Few results exist for the problem with disjoint partial data, we refer to works \cite{Rakesh2000,RakeshSacks2011,LassasOksanen2010,ImanuvilovUhlmannYamamoto2011}. 

On the other hand, the scenario where $\mathcal{S}$ and $\mathcal{R}$ represent non-empty open sets within the interior has received much less attention. To our knowledge, only one previous work has addressed this case: \cite{Helin_Lassas_Oksanen_Saksala2018} examines the situation where $\mathcal{S} = \mathcal{R}$. As a part of the results, the authors showed that the source-to-solution operator determines the manifold up to isomorphism. 

This kind of inverse problem with partial data is a common problem in many fields, such as physics, engineering, geology, and medical imaging. For instance, in geophysics, we may have only partial data about the earth's subsurface structure, and we may need to use techniques such as seismic tomography to estimate the distribution of materials and their properties.

\subsection{Main result.} 
In the present work, we focused on the case where $\mathcal{S}\cap\mathcal{R} =\emptyset$. It is worth noting that our method is applicable in situations where $\mathcal{S}$ and $\mathcal{R}$ are not disjoint. Our main result is the determination of the manifold $(M,g)$, up to isomorphism, from the source-to-solution operator $\Lambda_{\mathcal{S,R}}$, assuming that the following spectral bound condition holds: There is a constant $C_0>0$ such that for any basis of normalized eigenfunctions $\{\phi_j\}_{j\in \mathbb{N}}$ of the Laplace-Beltrami operator $\Delta_g$, the estimate
\begin{equation}\label{spec_cond}
	\tag{C}1\leq C_0 \|\phi_j\|_{L^2(\mathcal{S})}
\end{equation}
holds for $j\in \mathbb{N}$. It is observed that the aforementioned condition holds true for an Anosov surface, regardless of the specific choice of a non-empty open subset $\mathcal{S}$; see \cite{DyatlovJinNonnenmacher}. By Anosov surface, we refer to a compact connected Riemannian surface without a boundary, whose geodesic flow has the Anosov property. Anosov flows form a standard mathematical model of systems with strongly chaotic behaviour. Surfaces with negative Gauss curvature comprise a large class of examples.

Another sufficient condition for the spectral bound \eqref{spec_cond} is the exact controllability of $(M,g)$ from $\mathcal{S}$ in time $T>0$, that is the map
\begin{align*}
    U: L^2((0,T)&\times \mathcal S)\mapsto H^1(M)\times L^2(M)\\
    &U(f) = \left(u^f(T,\cdot),u^f(T,\cdot)\right)
\end{align*}
is surjective; see for instance \cite{HumbertPrivatTrelat}. It was also shown, in \cite{HumbertPrivatTrelat}, that exact controllability is satisfied under the geometric control condition: All geodesic rays, propagating in $M$, meet $\mathcal S$ within time $T$. We give a more detailed discussion in Section \ref{Sec_spect_bound}. 

As a main result, we prove the following:
\begin{theorem}\label{main_theorem}
    Let $(M,g)$ be a smooth, connected, compact Riemannian manifold. Let $\mathcal{R}$, $\mathcal{S}\subset M$ be open sets and $\Lambda_{\mathcal{S,R}}$ be the corresponding source-to-solution operator defined by \eqref{s_t_s_map}. Assume that $\mathcal{R}$ has a smooth boundary and the spectral bound condition \eqref{spec_cond} is satisfied for set $\mathcal{S}$. Then the data $(\mathcal{S},\left.g\right|_{\mathcal{S}})$, $(\mathcal{R},\left.g\right|_{\mathcal{R}})$, and $\Lambda_\mathcal{S,R}$ determine $(M,g)$ up to isometry. More precisely this means the following:

    Let $(M_1,g_1)$ and $(M_2,g_2)$ be smooth, connected and compact Riemannian manifolds, and let $\mathcal{R}_1$, $\mathcal{S}_1\subset M_1$ and $\mathcal{R}_2$, $\mathcal{S}_2\subset M_2$ be open non-empty sets. Assume that $\mathcal{R}_1$, $\mathcal{R}_2$ have smooth boundaries and the spectral bound condition \eqref{spec_cond} is satisfied for sets $\mathcal{S}_1$, $\mathcal{S}_2$ and there are isometries $\Phi: \overline{\mathcal{S}_1} \mapsto \overline{\mathcal{S}_2}$ and $\Psi: \overline{\mathcal{R}_1} \mapsto \overline{\mathcal{R}_2}$. Then, the identity
    \begin{equation*}
        \Lambda_{\mathcal{S}_1,\mathcal{R}_1}f = \Psi^* \Lambda_{\mathcal{S}_2,\mathcal{R}_2}\Phi_*f
    \end{equation*}
    implies that the manifolds $(M_1,g_1)$ and $(M_2,g_2)$ are isometric.
\end{theorem}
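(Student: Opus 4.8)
The overall strategy follows the Boundary Control (BC) method philosophy, reducing the inverse problem with disjoint data to the reconstruction of the travel-time/boundary-distance data, but with the key difficulty that we have no actual boundary to work from: the "observation" region $\mathcal{R}$ is an interior open set, and the spectral condition \eqref{spec_cond} on $\mathcal{S}$ is what replaces the Hassell--Tao type controllability hypothesis used in \cite{LassasOksanen2014}. The plan proceeds in four stages.

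First, I would recover the spectral data associated to $\mathcal{R}$ from $\Lambda_{\mathcal{S,R}}$. Using the Laplace transform of the source-to-solution kernel, or equivalently the poles and residues of $t\mapsto \Lambda_{\mathcal{S,R}}$ tested against sources concentrating in time, one reads off the eigenvalues $\lambda_j$ of $-\Delta_g$ and the products $\phi_j|_{\mathcal{S}}\otimes \phi_j|_{\mathcal{R}}$ (up to a global sign/normalization per eigenspace, and with the usual care about multiplicities). Here the spectral bound \eqref{spec_cond} is essential: since $\|\phi_j\|_{L^2(\mathcal{S})}\geq 1/C_0>0$ for every $j$, no eigenfunction is "invisible" from $\mathcal{S}$, so every eigenpair genuinely appears in the data and the traces $\phi_j|_{\mathcal{R}}$ can be normalized consistently using the known $\phi_j|_{\mathcal{S}}$ and the known metric $g|_{\mathcal{S}}$. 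This is the step where I expect the main technical obstacle to lie: handling eigenspaces of multiplicity $>1$ requires choosing an orthonormal basis within each eigenspace in a way that is simultaneously consistent on $\mathcal{S}$ and on $\mathcal{R}$, and one must check that the ambiguity (a unitary rotation within the eigenspace) is pinned down — or at least rendered harmless — by the condition \eqref{spec_cond} together with unique continuation, exactly as the Hassell--Tao condition is used in \cite{LassasOksanen2014}.

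Second, with the eigenvalues and the $\mathcal{R}$-traces $\{\phi_j|_{\mathcal{R}}\}$ in hand, I would reconstruct the manifold on a neighbourhood of $\overline{\mathcal{R}}$ — more precisely, reconstruct $(M,g)$ in the region reachable by finite-speed waves emanating from $\mathcal{R}$ — by the interior spectral data / Gelfand-type argument: the heat or wave kernel restricted to $\mathcal{R}\times\mathcal{R}$ is determined, hence the Dirichlet heat kernel of a small subdomain, hence (by the standard BC machinery of Belishev--Kurylev, adapted to interior observation using that $\mathcal{R}$ has smooth boundary) the Riemannian distance functions $r\mapsto d_g(r,\cdot)$ for $r\in\mathcal{R}$, and finally the metric via the boundary-distance representation. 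The smoothness of $\partial\mathcal{R}$ is used to run the BC method with $\mathcal{R}$ playing the role of the "known" piece of manifold, treating $\partial\mathcal{R}$ as an artificial boundary.

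Third, I would propagate the reconstruction from $\mathcal{R}$ to all of $M$. Because $M$ is connected and compact and every eigenfunction is nontrivial on $\mathcal{S}$, unique continuation for the wave equation (Tataru's theorem) guarantees that waves generated from $\mathcal{S}$ fill out all of $M$ in finite time; combined with the already-reconstructed eigenfunction traces on $\mathcal{R}$ and the known data on $\mathcal{S}$, a continuation/gluing argument extends the boundary-distance representation from a neighbourhood of $\overline{\mathcal{R}}$ to the whole manifold, exactly as in \cite{LassasOksanen2014}. Fourth, for the two-manifold formulation, one checks that the intertwining identity $\Lambda_{\mathcal{S}_1,\mathcal{R}_1}f=\Psi^*\Lambda_{\mathcal{S}_2,\mathcal{R}_2}\Phi_* f$ forces all the reconstructed invariants (eigenvalues, normalized traces, distance functions) to agree under $\Phi$ and $\Psi$, so the reconstructions of $(M_1,g_1)$ and $(M_2,g_2)$ coincide, yielding the isometry. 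The one subtlety to watch throughout is consistency of normalizations between the $\mathcal{S}$-side and the $\mathcal{R}$-side; this is precisely where \eqref{spec_cond} does its work, and I would expect the bulk of the paper's effort to be devoted to making that bookkeeping rigorous.
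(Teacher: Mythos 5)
Your proposal hinges on Stage 1: recovering the eigenvalues together with consistently normalized traces $\phi_j|_{\mathcal{R}}$ from the poles and residues of the transformed source-to-solution map, after which Stages 2--4 would be standard BC-method arguments run from $\mathcal{R}$. This is where the argument has a genuine gap. The residues of $\Lambda_{\mathcal{S},\mathcal{R}}$ determine only the mixed kernels $\sum_{k=1}^{K_j}\phi_{jk}|_{\mathcal{S}}\otimes\phi_{jk}|_{\mathcal{R}}$, and such a bilinear form does not determine the individual factors: already for a simple eigenvalue one may replace $\bigl(\phi_j|_{\mathcal{S}},\phi_j|_{\mathcal{R}}\bigr)$ by $\bigl(t\,\phi_j|_{\mathcal{S}},t^{-1}\phi_j|_{\mathcal{R}}\bigr)$ for any $t\neq 0$ without changing the data, and for multiplicity $K_j$ the ambiguity is a full $GL(K_j)$ action. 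The normalization $\|\phi_{jk}\|_{L^2(M)}=1$ cannot be imposed from the data because neither $\|\phi_{jk}\|_{L^2(\mathcal{S})}$ nor $\|\phi_{jk}\|_{L^2(\mathcal{R})}$ is determined: condition \eqref{spec_cond} gives only a \emph{lower bound} on the $L^2(\mathcal{S})$-norm, not its value, and knowing $g|_{\mathcal{S}}$ does not help since the functions $\phi_{jk}|_{\mathcal{S}}$ themselves are only known up to the same ambiguity. Consequently the normalized interior spectral data on $\mathcal{R}$ needed for your Stage 2 Gelfand/Belishev--Kurylev reconstruction is simply not contained in the data, and unique continuation cannot restore a normalization the data does not see. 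You correctly flag this as the main obstacle, but the suggested resolution (that \eqref{spec_cond} plus unique continuation pins down or neutralizes the ambiguity) is exactly the step that fails; indeed the impossibility of normalizing spectral data from disjoint measurements is the reason both \cite{LassasOksanen2014} and this paper take a different route.

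The paper's proof never recovers $\phi_j|_{\mathcal{R}}$ at all. Condition \eqref{spec_cond} is used only to decide, from the data, whether a sequence of waves $u^{\psi_m}(T,\cdot)$ generated by sources on $\mathcal{S}$ is bounded in $L^2(M)$ (Proposition \ref{L2boundedness}; only the spans $E_j$ of the traces on $\mathcal{S}$ are needed, and these are insensitive to the normalization ambiguity). Combining this with the Blagovestchenskii identity (Lemma \ref{B identity}) and approximate controllability via Tataru's unique continuation (Lemma \ref{applemma}), one can test whether a sequence of waves produced by sources on $\mathcal{R}$ converges weakly to zero (Lemma \ref{weak conv}). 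This test is then translated into verifying inclusions between domains of influence, $M(\Gamma_0,s_0)\subset\bigcup_{k}M(\Gamma_k,s_k)$ (Lemmas \ref{ver_intersection} and \ref{aux_openset_poin}, Proposition \ref{recov_relation}), from which the paper recovers $\sigma_{\mathcal{R}}$ and the distances $d_g(\gamma(s;y,\nu),z)$ for $y,z\in\partial\mathcal{R}$, i.e.\ the boundary distance data of the manifold $M\setminus\mathcal{R}$ with boundary $\partial\mathcal{R}$; the reconstruction is then completed by Section 3.8 of \cite{KatchalovKurylevLassas}. So the correct role of \eqref{spec_cond} is as a boundedness test for waves from $\mathcal{S}$ (the analogue of the Hassell--Tao condition), not as a device for normalizing eigenfunction traces, and your Stages 1--3 would need to be replaced by this weak-convergence and domain-of-influence argument.
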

We note that Theorem \ref{main_theorem} holds if condition \eqref{spec_cond} is satisfied for $\mathcal{R}$ instead of $\mathcal{S}$. This follows from the fact that $\Lambda_{\mathcal{R,S}}$ is the adjoint of $\Lambda_{\mathcal{S,R}}$ composed with the time-reversal operator. Moreover, the smoothness of the boundary of $\mathcal R$ can be removed since, otherwise, we can consider a non-empty open subset of $\mathcal R$ with a smooth boundary. 

\subsection{Otline of the paper} The proof of the main results uses the idea of \cite{LassasOksanen2014}. The paper proceeds as follows: In Section \ref{Sec_spect_bound}, we discuss the spectral bound condition \eqref{spec_cond}, which is analogous to the Hassell-Tao condition for eigenvalues and eigenfunctions in \cite{LassasOksanen2014}. In Section \ref{Sec_weak_convergence}, we derive Blagovestchenskii's identity to compute the inner product of solutions $u^f(T,\cdot)$ and $u^h(T,\cdot)$ generated by smooth sources supported in $\mathcal{S}$ and $\mathcal{R}$, respectively. By using the spectral bound condition, we determine whether a sequence of waves with sources on $\mathcal{S}$ is $L^2$-bounded. Using Blagovestchenskii's identity and this analysis, we determine whether the given sequences of waves with sources on $\mathcal{R}$ converge weakly to zero. In Section \ref{Sec_rec_mon}, we establish a connection between the weak convergence of the sequences of waves and a geometric property -- a certain relation between domains of influence. This enables us to determine the distance between any point on the manifold $M$ and any point on $\partial \mathcal{R}$. Ultimately, it remains to determin the manifold $(M\setminus \mathcal{R},g)$ from the boundary distance function, which was solved in \cite{KatchalovKurylevLassas}.

\section{Spectral bound: Condition \eqref{spec_cond}}\label{Sec_spect_bound}
This section will discuss some sufficient conditions for \eqref{spec_cond}. 

It is known that in the case of Anosov surface, condition \eqref{spec_cond} is satisfied for any fixed open non-empty subset $\mathcal{S} \subset M$; see Theorem 1 in \cite{DyatlovJinNonnenmacher}. By Anosov surface, we refer to a compact connected Riemannian surface without boundary, whose geodesic flow
\begin{equation*}
	\Phi_t : T^1M \rightarrow T^1M
\end{equation*}
on the unit tangent bundle $T^1M$ of $M$, is Anosov type, that is, satisfies the condition: There is a $\Phi_t$-invariant splitting of the tangent bundle $TT^1M$ of $T^1M$ into subbundles of dimension $1$
\begin{equation*}
	TT^1M = E_s\oplus E_u \oplus E_1
\end{equation*}
such that $d\Phi_t\left.\right|_{E_s}$ is contracting, $d\Phi_t\left.\right|_{E_u}$ is expanding, and $d\Phi_t\left.\right|_{E_1}$ is generated by the flow vectors. This is the definition of geodesic of Anosov type on the Riemannian surface. For this definition in higher dimensional case, see for instance \cite{Klingenberg}.

It is well known that the geodesic flow on a manifold of strict negative sectional curvature is Anosov type; see \cite{Anosov1967,AnosovSinai1967,ArnoldAvez}.

\begin{example}
	Let $M = \Gamma\setminus \mathbb{H}^2$ be a convex cocompact quotient of hyperbolic space. Then, $M$ is a conformally compact manifold of constant negative curvature. Consequently, $M$ is an Anosov surface. 
\end{example}

\subsection{Exact controllability, observability, and geometric control condition}
Let us consider an operator, for $T>0$,
\begin{align}\label{C*_operator}
	&C^* : L^2((0,T)\times S) \mapsto H^1(M)\times L^2(M),\\
	\nonumber&C^*f(\cdot) = \left( - u^f(T,\cdot), \partial_t u^f(T,\cdot)\right),
\end{align}
where $u^f$ is the solution of 
\begin{equation*}
	\begin{cases}
		(\partial_t^2 - \Delta_g) u (t,x) = \tilde{f}(t,x) & \text{in } (0,\infty)\times M; \\
		u\left.\right|_{t=0}= \partial_t u\left.\right|_{t=0} =0 & \text{in } M.
	\end{cases}	
\end{equation*}
with $\tilde{f}$ being extension of $f$ to $M$ by zero. We also consider the dual system
\begin{align}\label{C_operator}
	&C : H^{-1}(M) \times L^2(M) \mapsto L^2((0,T)\times S),\\
	\nonumber& C(v_T, \omega_T) = \omega\left.\right|_{x\in S},
\end{align}
where $\omega$ is the solution of the equation
\begin{equation*}
	\begin{cases}
		(\partial_t^2 - \Delta_g)\omega(t,x) = 0,        & \text{in } (0,\infty)\times M; \\\
		 \omega(t,x) \left.\right|_{t=T}= \omega_T,      & \text{in } M\\
		 \partial_t\omega(t,x)\left.\right|_{t=T} = v_T,    & \text{in } M.
	\end{cases}
\end{equation*}
Note that $C^*$ is an adjoint operator of $C$. Indeed, by using integration by parts, one can check that
\begin{multline*}
	\int_{0}^{T}\int_{S} C(v_T,\omega_T)f(t,x)dxdt = \int_{0}^{T}\int_{M} \omega(t,x) \left(\partial_t^2 u^f (t,x)- \Delta_gu^f (t,x)\right) dxdt\\
	= \int_{M} 
	\begin{bmatrix}
		v_T(x)\\
		\omega_T(x)
	\end{bmatrix}
	\cdot
	\begin{bmatrix}
		- u^f(T,x)\\
		\partial_tu^f(T,x)
	\end{bmatrix}
	dx
	=\int_{M}(v_t(x),\omega_T(x)) \cdot C^*f(x)dx.
\end{multline*}

\begin{figure}
	\centering
	\subfigure[]{\includegraphics[width=0.4\textwidth]{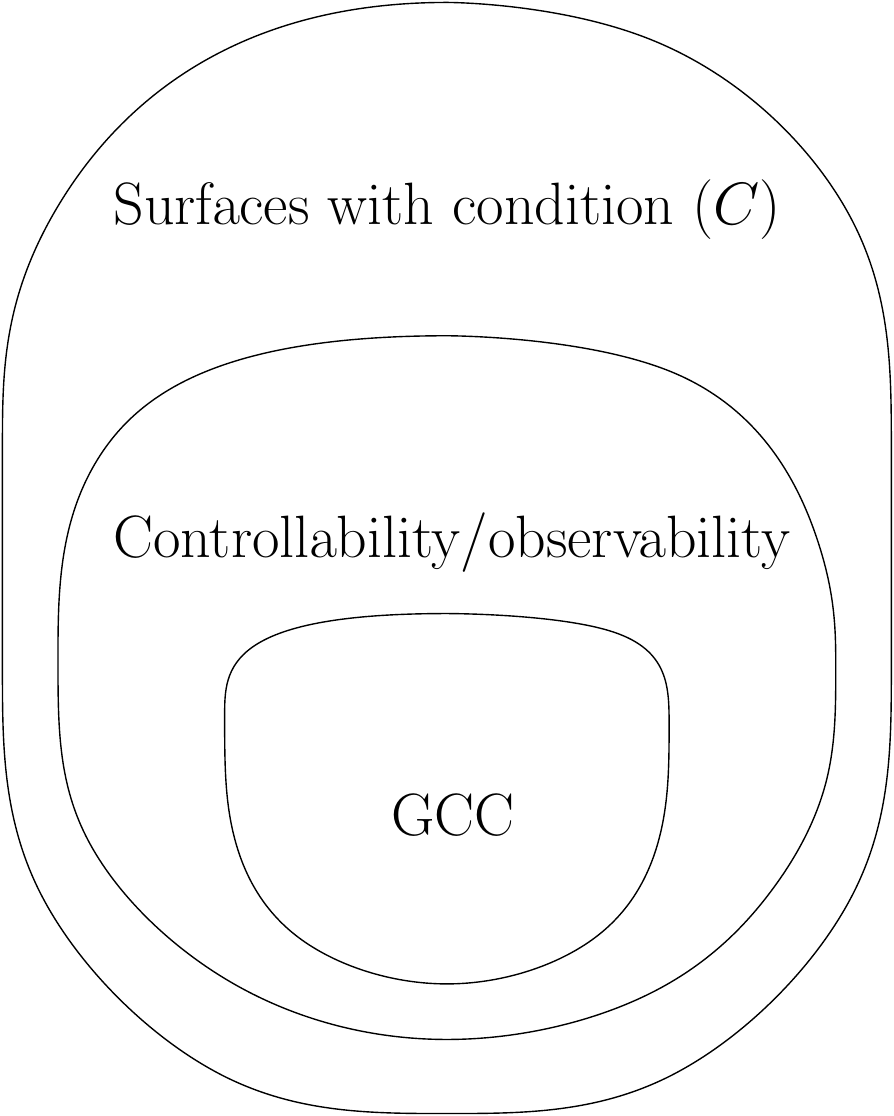}} 
	\hspace{20mm}
	\subfigure[]{\includegraphics[width=0.4\textwidth]{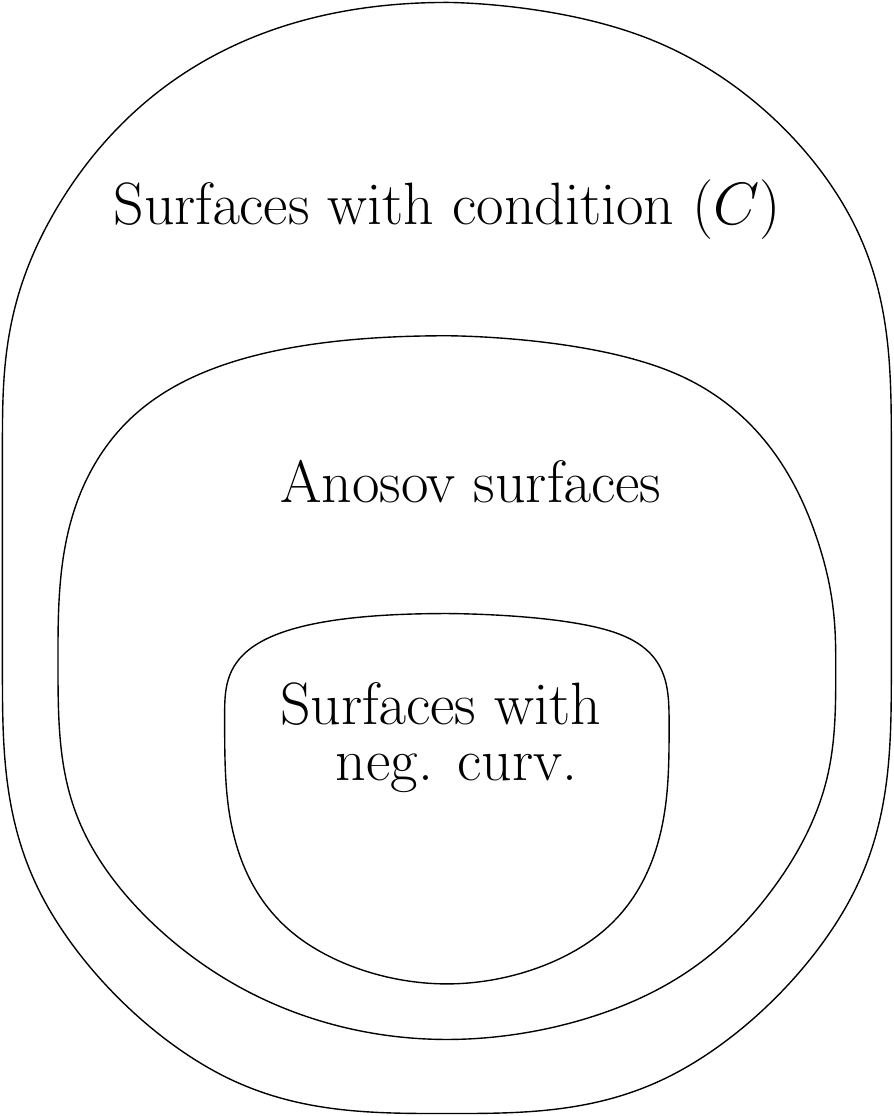}}
	\caption{}
	\label{fig:foobar}
\end{figure}

Now, we are ready to give definitions of the exact controllability and observability of the systems \eqref{C*_operator} and \eqref{C_operator}, respectively.

\begin{definition}
	The system \eqref{C*_operator} is exactly controllable if for any $(u_0,u_1) \in H^1(M)\times L^2(M)$, there exists $f\in L^2((0,T)\times S)$ such that the solution $u^f$ of \eqref{wave eq} satisfies $u^f(T,\cdot) = u_0$ and $\partial_t u^f(T,\cdot) = u_1$.
\end{definition}

\begin{definition}
	The system \eqref{C_operator} is observable if 
	\begin{equation*}
		\| \omega_T\|_{L^2(M)} + \|v_T\|_{H^{-1}(M)} \leq \|C(v_t,\omega_T)\|_{L^2((0,T)\times M)},
	\end{equation*}
or equivalently,
\begin{equation*}
	\| \omega(T,\cdot)\|_{L^2(M)} + \|\omega(T,\cdot)\|_{H^{-1}(M)} \leq \|\omega(\cdot,\cdot)\|_{L^2((0,T)\times M)}.
\end{equation*}
\end{definition}
The following theorem is a particular case of Theorem 2.1 in \cite{DoleckiRussell}:
\begin{theorem}
	The system \eqref{C*_operator} is exactly controllable if and only if the system \eqref{C_operator} is observable.
\end{theorem}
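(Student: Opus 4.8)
The plan is to obtain this equivalence as an instance of the abstract duality between surjectivity of a bounded linear operator and a lower bound for its adjoint, using the already-noted fact that $C^{*}$ is the adjoint of $C$. \textbf{Step 1 (mapping properties).} First I would record that both operators are bounded. For $f\in L^{2}((0,T)\times S)$, extended by zero to $M$, standard energy estimates for \eqref{wave eq} give $u^{f}\in C([0,T];H^{1}(M))\cap C^{1}([0,T];L^{2}(M))$ together with
\[
\|u^{f}(T,\cdot)\|_{H^{1}(M)}+\|\partial_{t}u^{f}(T,\cdot)\|_{L^{2}(M)}\le c\,\|f\|_{L^{2}((0,T)\times S)},
\]
so $C^{*}$ in \eqref{C*_operator} is bounded. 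Dually, for $(v_{T},\omega_{T})\in H^{-1}(M)\times L^{2}(M)$ the backward problem defining $\omega$ is well posed with $\omega\in C([0,T];L^{2}(M))\cap C^{1}([0,T];H^{-1}(M))$ and $\|\omega\|_{L^{2}((0,T)\times M)}\le c(\|v_{T}\|_{H^{-1}(M)}+\|\omega_{T}\|_{L^{2}(M)})$, so $C$ in \eqref{C_operator} is bounded; moreover, the integration-by-parts identity displayed above shows that $C^{*}=C'$ with respect to the pairing of $H^{1}(M)\times L^{2}(M)$ with its dual $H^{-1}(M)\times L^{2}(M)$, the space $L^{2}((0,T)\times S)$ being identified with its own dual.

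\textbf{Step 2 (abstract duality).} Next I would invoke the closed-range theorem in the following form: a bounded operator $A\colon X\to Y$ between Banach spaces is surjective if and only if its adjoint $A'\colon Y'\to X'$ is bounded below, i.e.\ $\|A'\eta\|_{X'}\ge c\|\eta\|_{Y'}$ for some $c>0$ and all $\eta\in Y'$. Applying this with $A=C^{*}$, $X=L^{2}((0,T)\times S)$ and $Y=H^{1}(M)\times L^{2}(M)$, and using $A'=C$ from Step 1, surjectivity of $C^{*}$ is equivalent to the existence of $c>0$ with
\[
\|C(v_{T},\omega_{T})\|_{L^{2}((0,T)\times S)}\ge c\bigl(\|\omega_{T}\|_{L^{2}(M)}+\|v_{T}\|_{H^{-1}(M)}\bigr)
\]
for all $(v_{T},\omega_{T})\in H^{-1}(M)\times L^{2}(M)$.

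\textbf{Step 3 (matching the definitions).} Finally I would identify the two conditions with those in the statement: surjectivity of $C^{*}$ onto $H^{1}(M)\times L^{2}(M)$ is precisely exact controllability of \eqref{C*_operator}, since by definition it says that every pair $(u_{0},u_{1})\in H^{1}(M)\times L^{2}(M)$ is reached by some control $f$ (the sign change $(a,b)\mapsto(-a,b)$ being a bijection of $H^{1}(M)\times L^{2}(M)$); and the displayed lower bound is exactly the observability inequality for \eqref{C_operator}, up to renaming the implicit positive constant and noting $\|C(v_{T},\omega_{T})\|_{L^{2}((0,T)\times S)}\le\|\omega\|_{L^{2}((0,T)\times M)}$. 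This yields the equivalence, as a special case of Theorem 2.1 in \cite{DoleckiRussell}.

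The only genuinely non-formal point—hence the main obstacle—is the function-space bookkeeping in Step 1: one has to verify that the backward wave solution with data in $H^{-1}(M)\times L^{2}(M)$ indeed belongs to $L^{2}((0,T)\times M)$, so that $C$ is bounded, and that the duality realising $C^{*}=C'$ is the $H^{1}$–$H^{-1}$ pairing rather than the naive $L^{2}$–$L^{2}$ one; the integration-by-parts computation above must be read with these identifications in mind. Once this is pinned down, the remainder is the abstract closed-range argument and is purely formal.
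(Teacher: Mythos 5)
Your proposal is correct and follows essentially the same route as the paper: the paper offers no proof of its own, simply invoking the abstract controllability--observability duality of Theorem 2.1 in \cite{DoleckiRussell}, and your closed-range argument (surjectivity of $C^*$ equivalent to a lower bound for its adjoint $C$ under the $H^1$--$H^{-1}$ pairing) is precisely the content of that citation, with the functional-analytic bookkeeping spelled out. Your reading of the observability inequality with an implicit constant and with the $L^2((0,T)\times S)$ norm on the right is also the intended one.
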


\begin{remark}
	The observability of \eqref{C*_operator} implies the spectral condition  \eqref{spec_cond}; see Theorem 1 in \cite{HumbertPrivatTrelat}. Consequently, the controllability of \eqref{C_operator} gives condition \eqref{spec_cond}.
\end{remark}

The following example, see \cite[page 753]{HumbertPrivatTrelat}, shows that the converse is not true.

\begin{example}[\textbf{A}]
	Let $M = \mathbb{T}^2$ and $S$ be the union of four triangles, each of the being at a corner of the square and whose side length is $1/2$. 
\end{example}

Another notion, related to observability property, is the Geometric Control Condition (GCC): All geodesic rays, propagating in $M$, meet $S$ within time $T$.  It is known that, for $S$ open, observability holds if the pair $(S,T)$ satisfy GCC; see for instance Corollary 3 in \cite{HumbertPrivatTrelat}. The converse is not true:

\begin{example}[\textbf{B}]
	Take $M=\mathbb{S}^2$, the unit sphere in $\mathbb{R}^3$, with standard metric and 
	$$S = \{(x,y,z)\in \mathbb{R}^3: x^2+y^2+z^2 = 0 \text{ and } z>0\}.$$
	In this case, for $T>\pi$ we have exact controllability for $(S,T)$, while GCC is violated; see Example B in \cite[page 15]{Lebeau}.
\end{example}
We summertime the discussion  of this section in Figure \ref{fig:foobar}.

\section{Testing weak convergence of sequences of waves}\label{Sec_weak_convergence}
In this section, we demonstrate that the source-to-solution map allows us to check if a sequence of waves converges weakly.
\subsection{Blagovestchenskii's identity}
We begin by obtaining the Blagovestchenskii identity, which was originally introduced in \cite{Blagovestchenskii1969,Blagovestchenskii1971}. Namely, we show that the value $\omega_{f,h}(T,T)$ can be recovered from $\Lambda_{\mathcal{R,S}}$, where
\begin{equation*}
	\omega_{f,h}(t,s):= \left(u^f(t,\cdot), u^h(s,\cdot )\right)_{L^2(M)},
\end{equation*}
and $h\in C_0^\infty( (0,T)\times \mathcal{R})$ and	$f\in C_0^\infty( (0,T)\times \mathcal{S})$.

To do this, we start with verifying that the adjoint of $\Lambda_{\mathcal{R,S}}$ is the operator $\Lambda_{\mathcal{S,R}}$ composed with the time-reversal operator defined as follows
\begin{equation*}
	R\psi(t): = \psi(T - t),
	\qquad 
	\psi\in C_0^\infty((0,T)).
\end{equation*}
The idea of the proof is based on Lemma 4.15 in \cite{KatchalovKurylevLassas}, we give it here for the convenience of the reader.
\begin{lemma}\label{LRLR}
	Let $h\in C_0^\infty( (0,T)\times \mathcal{S})$ and	$f\in C_0^\infty( (0,T)\times \mathcal{R})$, then
	\begin{equation*}
		\left(\Lambda_{\mathcal{R,S}}f,h\right)_{L^2((0,T)\times M)} = 	\left(f,R\Lambda_{\mathcal{S,R}}Rh\right)_{L^2((0,T)\times M)}.
	\end{equation*}
\end{lemma}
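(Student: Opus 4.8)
The plan is to prove this duality identity by a direct integration-by-parts argument on the wave equation, exploiting the symmetry of the d'Alembertian $\partial_t^2-\Delta_g$ under time reversal. First I would write out both sides in terms of solutions of \eqref{wave eq}: by definition the left-hand side is $\int_0^T\int_M u^f\, h\, dx\,dt$ where $u^f$ solves the wave equation with source $f$ supported in $(0,T)\times\mathcal{R}$, and the integrand is automatically supported in $(0,T)\times\mathcal{S}$ after restriction. Similarly, the right-hand side involves $u^{Rh}$, the solution with the time-reversed source $Rh$ supported in $(0,T)\times\mathcal{S}$.

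The key step is the following computation. Let $u=u^f$ and $w=u^{Rh}$, and consider the time-reversed solution $\tilde w(t,x):=w(T-t,x)$. One checks that $\tilde w$ solves $(\partial_t^2-\Delta_g)\tilde w = (Rh)(T-t,x) = h(t,x)$, but with \emph{final} conditions $\tilde w|_{t=T}=\partial_t\tilde w|_{t=T}=0$ rather than initial conditions at $t=0$. Now I would integrate $\int_0^T\int_M \big((\partial_t^2-\Delta_g)u\big)\tilde w - u\big((\partial_t^2-\Delta_g)\tilde w\big)\,dx\,dt$ two ways: on one hand it equals $\int_0^T\int_M f\tilde w - u\, h\,dx\,dt$, and on the other hand, integrating by parts in $t$ twice and using Green's formula in $x$ (no boundary terms since $M$ is closed), the whole expression reduces to boundary-in-time terms $\big[\,(\partial_t u)\tilde w - u\,\partial_t\tilde w\,\big]_{t=0}^{t=T}$. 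The term at $t=0$ vanishes because $u|_{t=0}=\partial_t u|_{t=0}=0$, and the term at $t=T$ vanishes because $\tilde w|_{t=T}=\partial_t\tilde w|_{t=T}=0$. Hence $\int_0^T\int_M f\,\tilde w\,dx\,dt = \int_0^T\int_M u\,h\,dx\,dt$. Finally I would identify $\tilde w|_{(0,T)\times\mathcal{R}}$ with $R\Lambda_{\mathcal{S,R}}Rh$: since $w=u^{Rh}=\Lambda_{\mathcal{S,R}}$-extended solution with source $Rh$ on $\mathcal{S}$, restricting to $\mathcal{R}$ gives $\Lambda_{\mathcal{S,R}}Rh$, and $\tilde w = Rw$ contributes the outer $R$. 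Matching supports ($f$ lives on $\mathcal{R}$, $h$ on $\mathcal{S}$), the identity $\int_0^T\int_M u^f h\,dx\,dt = \int_0^T\int_M f\,(R\Lambda_{\mathcal{S,R}}Rh)\,dx\,dt$ is exactly the claim.

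The main obstacle, such as it is, is bookkeeping rather than anything deep: one must be careful about which system has initial versus final conditions, track the time-reversal $R$ acting on both the source and the solution, and confirm that the restriction operators to $\mathcal{S}$ and $\mathcal{R}$ are compatible with the support conditions $f\in C_0^\infty((0,T)\times\mathcal{R})$, $h\in C_0^\infty((0,T)\times\mathcal{S})$ stated in the lemma. Since $\Delta_g$ is self-adjoint on the closed manifold $M$ and $C_0^\infty$ sources guarantee smooth solutions with finite propagation speed, all integrations by parts are justified with no boundary contributions in $x$; the only boundary terms are the temporal ones, which cancel by the initial/final conditions. No regularization or density argument is needed beyond the smoothness already assumed.
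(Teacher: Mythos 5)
Your proof is correct and follows essentially the same route as the paper: your time-reversed solution $\tilde w = Ru^{Rh}$ is exactly the auxiliary solution $\omega$ (with vanishing data at $t=T$ and source $h$) used in the paper, and the argument — Green's identity on the closed manifold with the temporal boundary terms cancelling via the initial conditions of $u^f$ at $t=0$ and the final conditions of $\tilde w$ at $t=T$ — matches the paper's computation, differing only in that the paper performs the change of variables $t\mapsto T-t$ before integrating by parts.
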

\begin{proof}
	Let $h\in C_0^\infty( (0,T)\times \mathcal{S})$ and	$f\in C_0^\infty( (0,T)\times \mathcal{R})$. Let $\omega$ be the solution of the equation
	\begin{equation*}
		\begin{cases}
			(\partial_t^2 - \Delta_g)\omega(t,x) = h(t,x);\\
			\partial_t\omega(t,x)\left.\right|_{t=T} = \omega(t,x) \left.\right|_{t=T}= 0.
		\end{cases}
	\end{equation*}
We note that $Ru^f$ solves is the solution of
\begin{equation*}
	\begin{cases}
		(\partial_t^2 - \Delta_g)Ru^f(t,x) = Rf(t,x), & \text{in } (0,\infty)\times M;\\
		\partial_tRu^f(T,x) = Ru^f(T,x) = 0;
	\end{cases}
\end{equation*}
and $R\omega$ is the solutions of
\begin{equation*}
	\begin{cases}
		(\partial_t^2 - \Delta_g)R\omega(t,x) = Rh(t,x), & \text{in } (0,\infty)\times M;\\
		\partial_tR\omega(0,x) = R\omega(0,x) = 0.
	\end{cases}
\end{equation*}
Therefore, by changing variables $t \mapsto T-t$, we obtain
\begin{align*}
	\int_{M}\int_{0}^{T} \Lambda_{\mathcal{R,S}}f(t,x)h(t,x)dtdV(x)& = \int_{M}\int_{0}^{T} R\Lambda_{\mathcal{R,S}}f(t,x)Rh(t,x)dtdV(x)\\
	& =\int_{M}\int_{0}^{T} R\Lambda_{\mathcal{R,S}}f(t,x)(\partial_t^2 - \Delta_g)R\omega(t,x)dtdV(x)\\
        & = \int_{M}\int_{0}^{T} Ru^f(t,x)(\partial_t^2 - \Delta_g)R\omega(t,x)dtdV(x).
\end{align*}
Then, by Green's identity, we obtain
\begin{align*}
	\left(\Lambda_{\mathcal{R,S}}f,h\right)_{L^2((0,T)\times M)} &= \int_{M}\int_{0}^{T} (\partial_t^2 - \Delta_g) Ru^f(t,x)R\omega(t,x)dtdV(x)\\
	& = \int_{M}\int_{0}^{T} Rf(t,x)\Lambda_{\mathcal{S,R}} Rh(t,x)dtdV(x)\\
	& = \int_{M}\int_{0}^{T} f(t,x) R\Lambda_{\mathcal{S,R}} Rh(t,x)dtdV(x).
\end{align*}
Here, we used that $R$ is symmetric.
\end{proof}

Now, we are ready to obtain the main result of this subsection.
\begin{lemma}\label{B identity}[Blagovestchenskii's identity]
	Let $h\in C_0^\infty( (0,T)\times \mathcal{R})$ and	$f\in C_0^\infty( (0,T)\times \mathcal{S})$, then
	\begin{equation*}
		\left(u^f(T,\cdot), u^h(T,\cdot)\right)_{L^2(M)} = \left(R\Lambda_{\mathcal{S,R}}RJf, h\right)_{L^2((0,T)\times \mathcal{R})} - \left( J\Lambda_{\mathcal{S,R}}f, h\right)_{L^2((0,T)\times \mathcal{R})},
	\end{equation*}
where 
\begin{equation*}
	J\psi(s) = \int_{s}^{2T - s} \psi(t) ds.
\end{equation*}
\end{lemma}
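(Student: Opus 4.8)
The plan is to compute the function $\omega_{f,h}(t,s) = (u^f(t,\cdot), u^h(s,\cdot))_{L^2(M)}$ via a wave-equation argument, then specialize to $t = s = T$. First I would observe that, for fixed $s$, the map $t \mapsto \omega_{f,h}(t,s)$ satisfies a one-dimensional inhomogeneous wave equation in $t$. Concretely, using $(\partial_t^2 - \Delta_g)u^f = f$ and integrating by parts on $M$ (the manifold is closed, so no boundary terms appear),
\begin{align*}
	(\partial_t^2 - \partial_s^2)\omega_{f,h}(t,s)
	&= (\partial_t^2 u^f(t,\cdot), u^h(s,\cdot))_{L^2(M)} - (u^f(t,\cdot), \partial_s^2 u^h(s,\cdot))_{L^2(M)}\\
	&= (\Delta_g u^f(t,\cdot) + f(t,\cdot), u^h(s,\cdot))_{L^2(M)} - (u^f(t,\cdot), \Delta_g u^h(s,\cdot) + h(s,\cdot))_{L^2(M)}\\
	&= (f(t,\cdot), u^h(s,\cdot))_{L^2(M)} - (u^f(t,\cdot), h(s,\cdot))_{L^2(M)}.
\end{align*}
The first term on the right is $(f(t,\cdot), u^h(s,\cdot))_{L^2(\mathcal{S})}$ since $f$ is supported in $\mathcal{S}$, and $u^h(s,\cdot)|_{\mathcal S}$ is exactly the data furnished by $\Lambda_{\mathcal{R,S}}h$; similarly the second term is $(u^f(t,\cdot), h(s,\cdot))_{L^2(\mathcal{R})}$, which is $\Lambda_{\mathcal{S,R}}f$ tested against $h$. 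So the right-hand side is a known function of $(t,s)$, call it $F(t,s)$, expressible through $\Lambda_{\mathcal{S,R}}$ (using Lemma \ref{LRLR} to convert $\Lambda_{\mathcal{R,S}}$ to $R\Lambda_{\mathcal{S,R}}R$).

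Next I would solve this 1+1 dimensional wave equation for $\omega_{f,h}$ with zero Cauchy data. Because $u^f|_{t=0} = \partial_t u^f|_{t=0} = 0$ and likewise for $u^h$, we have $\omega_{f,h}(0,s) = \partial_t\omega_{f,h}(0,s) = 0$ for all $s$, and by symmetry $\omega_{f,h}(t,0) = \partial_s\omega_{f,h}(t,0) = 0$. The d'Alembert/Duhamel formula on the quarter-plane then gives $\omega_{f,h}$ as an explicit double integral of $F$ over a triangular region; evaluating at $(t,s) = (T,T)$ produces an integral of $F$ over the triangle $\{(t,s): |t-s| \le 2T - t - s,\ \ldots\}$, which after rearranging the order of integration collapses to the stated one-variable operator $J\psi(s) = \int_s^{2T-s}\psi(t)\,dt$ acting in the appropriate slot. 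The two terms in the final formula correspond precisely to the two terms in $F$: the $(f, u^h)$-term, after the Lemma \ref{LRLR} manipulation, yields $(R\Lambda_{\mathcal{S,R}}RJf, h)_{L^2((0,T)\times\mathcal R)}$, and the $(u^f, h)$-term yields $-(J\Lambda_{\mathcal{S,R}}f, h)_{L^2((0,T)\times\mathcal R)}$.

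The main obstacle I anticipate is purely bookkeeping rather than conceptual: getting the domain of integration in the quarter-plane Duhamel formula exactly right, tracking which variable $J$ acts on, and making sure the time-reversal $R$ lands on the correct factor so that the supports stay inside $(0,T)$. One has to be careful that $Jf$ is still an admissible test function (its time support may spread, but it remains compactly supported in $(0,2T)$, and the pairing is only against $h$ supported in $(0,T)\times\mathcal R$, so only values of the integrand on $(0,T)$ matter). I would double-check the computation by testing the identity on separable $f$ and $h$ and against the known closed-manifold case in \cite{KatchalovKurylevLassas}. Apart from this indexing care, every step is a routine integration by parts justified by the smoothness and compact support of $f$ and $h$ and the finite speed of propagation.
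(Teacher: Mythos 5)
Your plan follows the paper's own argument essentially verbatim: compute $(\partial_t^2-\partial_s^2)\omega_{f,h}=F(t,s)$ with $F$ expressed through $\Lambda_{\mathcal{R,S}}h$ and $\Lambda_{\mathcal{S,R}}f$, solve the resulting $1{+}1$-dimensional wave equation with vanishing Cauchy data by the Duhamel/d'Alembert formula, evaluate at $(t,s)=(T,T)$ to produce the operator $J$, and invoke Lemma \ref{LRLR} to replace $\Lambda_{\mathcal{R,S}}$ by $R\Lambda_{\mathcal{S,R}}R$. The remaining work you flag (the integration domain and which slot $J$ acts on) is exactly the bookkeeping the paper carries out, so the proposal is correct and takes the same route.
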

\begin{proof}
Let $h\in C_0^\infty( (0,T)\times \mathcal{R})$ and	$f\in C_0^\infty( (0,T)\times \mathcal{S})$. We write
\begin{multline*}
	(\partial_t^2 - \partial_s^2) \omega_{f,h}(t,s) = \\ 
	\left(f(t,\cdot) + \Delta_g u^f(t,\cdot) , u^h(s,\cdot)\right)_{L^2(M)} -  \left( u^f(t,\cdot) , h(s,\cdot) + \Delta_g u^h(s,\cdot)\right)_{L^2(M)}.
\end{multline*}
By using Green's identity, we write
\begin{align*}
	(\partial_t^2 - \partial_s^2) \omega_{f,h}(t,s) &= \left(f(t,\cdot) , u^h(s,\cdot)\right)_{L^2(M)} -  \left( u^f(t,\cdot) , h(s,\cdot) \right)_{L^2(M)}\\
	& = \left(f(t,\cdot) , \Lambda_{\mathcal{R,S}}h(s,\cdot)\right)_{L^2(M)} -  \left(\Lambda_{\mathcal{S,R}}f(t,\cdot) , h(s,\cdot) \right)_{L^2(M)}.
\end{align*}
Let us denote
\begin{equation*}
	F(t,s):=  \left(f(t,\cdot) , \Lambda_{\mathcal{R,S}}h(s,\cdot)\right)_{L^2(M)} -  \left(\Lambda_{\mathcal{S,R}}f(t,\cdot) , h(s,\cdot) \right)_{L^2(M)}.
\end{equation*}
Then, recalling the initial conditions, we see that $\omega_{f,h}$ is the solution of the one dimensional wave equation
\begin{equation*}
	\begin{cases}
		(\partial_t^2 - \partial_s^2) \omega_{f,h}(t,s) = F(t,s) & \text{on } [0,T]\times[0,T];\\
		\omega_{f,h}(t,s)\left.\right|_{t=0} = \partial_t\omega_{f,h}(t,s)\left.\right|_{t=0}= 0.
	\end{cases}
\end{equation*}
According to \cite{Evans2010}, its solution can be written as follows 
\begin{equation*}
	\omega_{f,h}(t,s) = \frac{1}{2} \int_{0}^{s} \int_{t - \tau}^{t + \tau} F(r,T - \tau) dr d\tau,
\end{equation*}
or, after changing of coordinates $\tau\mapsto T - \tau$,
\begin{equation*}
	\omega_{f,h}(t,s) = \frac{1}{2} \int_{T - s}^{T} \int_{t - T + \tau}^{t + T - \tau} F(r,\tau) dr d\tau.
\end{equation*}
In particular, we get
\begin{equation*}
	\omega_{f,h}(T,T) = \frac{1}{2} \int_{0}^{T} \int_{s}^{2T - s} F(t,s) dtds.
\end{equation*}
Recalling, the definition of $F$, we derive
\begin{multline*}
	2\omega_{f,h}(T,T) = \\
	\int_M\int_{0}^T \Lambda_{\mathcal{R,S}} h(s,x)  \int_{s}^{2T - s} f(t,x)dt   dsdV(x) - \int_M\int_{0}^T  h(s,x)  \int_{s}^{2T - s} \Lambda_{\mathcal{S,R}} f(t,x)dt   dsdV(x) .
\end{multline*}
Using Lemma \ref{LRLR}, we get
\begin{equation*}
	\omega_{f,h}(T,T)  = \left(R\Lambda_{\mathcal{S,R}}RJf, h\right)_{L^2((0,T)\times \mathcal{R})} - \left( J\Lambda_{\mathcal{S,R}}f, h\right)_{L^2((0,T)\times \mathcal{R})}.
\end{equation*}
\end{proof}

\subsection{Approximate controllability}
Here, we demonstrate that the approximate controllability of the wave equation can be derived from Tataru's unique continuation result \cite{Tataru}.

Let $\Gamma\subset M$ be an open non-empty subset. Then, the domain of influence of set $\Gamma$ is defined as follows
\begin{equation*}
	M(\Gamma, T) := \left\{x\in M: \; d_g(x,\Gamma)\leq T\right\}.
\end{equation*}
We also set
\begin{equation*}
	L^2(M(\Gamma, T)) := \left\{\phi\in L^2(M): \; \textrm{supp}\phi \subset M(\Gamma, T) \right\}.
\end{equation*}
It is easy to show that the space $L^2(M(\Gamma, T))$, equipped with $L^2$-norm, is a Hilbert space.
\begin{lemma}\label{applemma}
	Let $T>0$, then
	\begin{equation*}
		\mathcal{B}(\Gamma,T): = \left\{u^f(T,\cdot): \; f \in C_0^\infty((0,T)\times \Gamma )\right\}
	\end{equation*}
	is a dense subset of $L^2(M(\Gamma, T))$.
\end{lemma}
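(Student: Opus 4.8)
The plan is to prove density of $\mathcal{B}(\Gamma,T)$ in $L^2(M(\Gamma,T))$ by a duality argument combined with Tataru's unique continuation theorem. First I would observe that $\mathcal{B}(\Gamma,T) \subset L^2(M(\Gamma,T))$: this is the finite speed of propagation for the wave equation, so every wave $u^f(T,\cdot)$ with source in $(0,T)\times\Gamma$ is supported in the domain of influence $M(\Gamma,T)$. Since $L^2(M(\Gamma,T))$ is a Hilbert space, to show density it suffices to show that the only $\psi \in L^2(M(\Gamma,T))$ orthogonal to every element of $\mathcal{B}(\Gamma,T)$ is $\psi = 0$.

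So suppose $\psi \in L^2(M(\Gamma,T))$ satisfies $(u^f(T,\cdot),\psi)_{L^2(M)} = 0$ for all $f \in C_0^\infty((0,T)\times\Gamma)$. The standard device is to introduce the adjoint (backward) wave: let $w$ solve
\begin{equation*}
	\begin{cases}
		(\partial_t^2 - \Delta_g) w(t,x) = 0, & \text{in } (0,T)\times M;\\
		w(T,x) = 0, \quad \partial_t w(T,x) = \psi(x), & \text{in } M.
	\end{cases}
\end{equation*}
Pairing the equation for $u^f$ against $w$ and integrating by parts in space-time over $(0,T)\times M$ — all boundary terms at $t=0$ vanish by the initial conditions on $u^f$, and at $t=T$ the only surviving term is $(u^f(T,\cdot),\partial_t w(T,\cdot))_{L^2(M)} = (u^f(T,\cdot),\psi)_{L^2(M)}$ — one finds
\begin{equation*}
	\int_0^T \int_M f(t,x)\, w(t,x)\, dV(x)\, dt = \pm (u^f(T,\cdot),\psi)_{L^2(M)} = 0
\end{equation*}
for every $f \in C_0^\infty((0,T)\times\Gamma)$. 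Since $f$ ranges over all such test functions, this forces $w \equiv 0$ on $(0,T)\times\Gamma$.

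Now $w$ solves the wave equation and vanishes on the open set $(0,T)\times\Gamma$. Tataru's unique continuation theorem \cite{Tataru} then propagates this vanishing: $w(t,x) = 0$ for all $(t,x)$ in the double cone of influence of $(0,T)\times\Gamma$, in particular $w(T,x) = \partial_t w(T,x) = 0$ at every $x$ with $d_g(x,\Gamma) < T$, i.e. on the interior of $M(\Gamma,T)$. Hence $\psi = \partial_t w(T,\cdot) = 0$ a.e. on $M(\Gamma,T)$, and since $\psi$ was supported there, $\psi = 0$ in $L^2(M)$. This proves density. I expect the main obstacle to be the careful application of Tataru's theorem — getting the time levels and the geometry of the domain of influence exactly right (the theorem is naturally stated for unique continuation across a hypersurface, so one must foliate the relevant region and iterate, or invoke the sharp global version), and handling the fact that $\Gamma$ and $M(\Gamma,T)$ need not be related to any boundary so that this is genuinely an interior unique continuation statement. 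The integration-by-parts identity and the finite-speed-of-propagation inclusion are routine by comparison.
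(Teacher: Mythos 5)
Your overall strategy is the same as the paper's (finite speed of propagation for the inclusion, reduction to showing the orthogonal complement is trivial, the backward wave $w$ with data $w(T,\cdot)=0$, $\partial_t w(T,\cdot)=\psi$, and integration by parts to get $w=0$ on $(0,T)\times\Gamma$), but the final application of Tataru's theorem contains a genuine gap. From the vanishing of $w$ on $(0,T)\times\Gamma$ alone, the sharp unique continuation theorem gives $w=0$ only on the double cone $\{(t,x):\, d_g(x,\Gamma)<\min(t,\,T-t)\}$, which reaches at most distance $T/2$ from $\Gamma$ (at the central time $t=T/2$) and is empty at $t=T$. So your step ``in particular $w(T,x)=\partial_t w(T,x)=0$ at every $x$ with $d_g(x,\Gamma)<T$'' does not follow: to control the Cauchy data at $t=T$ out to distance $T$ you need vanishing of the solution on a cylinder of temporal length $2T$ centered at $t=T$, not on $(0,T)\times\Gamma$. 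You flagged ``getting the time levels right'' as the main obstacle, but as written the uniqueness region you invoke is too large by a factor of two.

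The paper closes exactly this gap with a reflection step you omitted: since $w(T,\cdot)=0$, one glues $w$ with its odd extension $\tilde w(t,x)=-w(2T-t,x)$ (equivalently, observes that the solution with data $(0,\psi)$ at $t=T$ is odd about $t=T$) to obtain a solution $W$ of the wave equation on $(0,2T)\times M$ that vanishes on all of $(0,2T)\times\Gamma$. Tataru's theorem applied on this doubled cylinder yields $W=0$ on $\{(t,x):\, d_g(x,\Gamma)<\min(t,\,2T-t)\}$, whose slice at $t=T$ is $\{x:\, d_g(x,\Gamma)<T\}$; hence $\psi=\partial_t W(T,\cdot)=0$ there, and so $\psi=0$ in $L^2(M(\Gamma,T))$. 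This is a one-line fix, and the rest of your argument matches the paper's proof, but without it the proof as stated is incomplete.
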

\begin{proof}
	Let $f\in C_0^\infty((0,T)\times \Gamma)$, then, by the finite speed of the wave propagation, we know that $u^f(T,\cdot)$ is supported in $M(\Gamma,T)$, so that $\mathcal{B}(\Gamma,T) \subset L^2(M(\Gamma, T))$. Since $L^2(M(\Gamma, T))$ is a Hilbert space, to prove density, it is sufficient to show that $\mathcal{B}(\Gamma,T)^{\perp} = \{0\}$, where
	\begin{equation*}
		\mathcal{B}(\Gamma,T)^{\perp} := \left\{ h \in  L^2(M(\Gamma, T)): \; (h,u)_{L^2(M)} = 0 \;\text{ for } u\in 	\mathcal{B}(\Gamma,T) \right\}.
	\end{equation*}
	
	Assume that $h\in \mathcal{B}(\Gamma,T)^{\perp}$, so that
	\begin{equation*}
		(u^f(T,\cdot), h(\cdot))_{L^2(M)} = 0
	\end{equation*}
	for all $f\in C_0^\infty((0,T)\times \Gamma)$. Let $\omega$ be the solution of the equation
	\begin{equation*}
		\begin{cases}
			(\partial_t^2 - \Delta_g)\omega = 0; & \text{in } (0,T)\times M;\\
			\omega(t,x)\left.\right|_{t=T} =  0; & \text{in } M;\\
			\partial_t\omega(t,x)\left.\right|_{t=T} =  h(x), & \text{in } M.
		\end{cases}
	\end{equation*}
	Then, we compute
	\begin{multline*}
		(f,\omega)_{L^2((0,T)\times M)} = ((\partial_t^2 - \Delta_g)u^f,\omega)_{L^2((0,T)\times M)}\\
		 = - \int_Mu^f(T,x)\partial_t\omega(T,x)dV(x) = (u^f(T,\cdot),h(\cdot))_{L^2(M)} = 0.
	\end{multline*}
	Since $f\in C_0^\infty((0,T)\times \Gamma)$ is an arbitrary function, it follows that $\omega = 0$ on $(0,T)\times \Gamma$. Let $\tilde{\omega}$ be the odd continuation of $\omega$ into $(T,2T)\times M$, that is $\tilde{\omega}(t,x) = -\omega(2T - t, x)$. Then $\tilde{\omega}$ solves
	\begin{equation*}
		\begin{cases}
			(\partial_t^2 - \Delta_g)\tilde{\omega} = 0; & \text{in } (T,2T)\times M;\\
			\tilde{\omega}(t,x)\left.\right|_{t=T}  = 0; & \text{in } M;\\
			\partial_t\tilde{\omega}(t,x) \left.\right|_{t=T} = h(x), & \text{in } M.
		\end{cases}
	\end{equation*}
	Moreover, since 
	\begin{equation*}
		\tilde{\omega}(T,x) = \omega(T,x) = 0
		\quad
		and 
		\quad
		\tilde{\omega}(t,x) \left.\right|_{t=T} = \partial_t\omega(t,x)\left.\right|_{t=T} =  h(x),
	\end{equation*}
	we can glue $\omega$ and $\tilde{\omega}$ to obtain the function
	\begin{equation*}
		W(t,x):= 
		\begin{cases}
			\omega(t,x) & \text{in } (0,T)\times M;\\
			\tilde{\omega}(t,x) & \text{in } (T,2T) \times M.
		\end{cases}
	\end{equation*}
	Then, $W$ solves the following equation
	\begin{equation*}
		\begin{cases}
			(\partial_t^2 - \Delta_g)W = 0, & \text{in } (0,2T)\times M;\\
			W(t,x)\left.\right|_{t=0} = \omega(t,x)\left.\right|_{t=0}, & \text{in } M;\\
			\partial_tW(t,x)\left.\right|_{t=0}= \partial_t\omega(t,x)\left.\right|_{t=0}, & \text{in } M,
		\end{cases}
	\end{equation*}
	and satisfies 
	\begin{equation*}
		W(t,x) = 0 \quad \text{for } (t,x) \in (0,2T)\times \Gamma.
	\end{equation*}
	Therefore, by Tataru's unique continuation result, see \cite{KatchalovKurylevLassas,Tataru}, we conclude that
	\begin{equation*}
		W = 0 \qquad 
		\text{in } \{(t,x)\in (0,2T)\times M: d_g(x,\Gamma)<\min(t, 2T-t)\}.
	\end{equation*}
In particular, at the level $t=T$, we have that
\begin{equation*}
    h(x) = \partial_tW(t,x)\left.\right|_{t=T} = 0 \qquad \text{for } x\in M(\Gamma,T).
\end{equation*}
 \end{proof}

\subsection{Testing weak convergence of sequences of waves}
The following lemma will be used on testing the weak convergence of the given waves with sources supported on $\mathcal{R}$.
\begin{lemma}\label{weak conv}
	Let $\Gamma\subset M$ be open, non-empty set, and let 
	$$T > \max_{x\in M} d_g(x,\Gamma).$$
	A sequence $\{v_l\}_{l\in \mathbb{N}}$ converges to zero weakly in $L^2(M)$ if and only if both (i) and (ii) hold, where
		\begin{itemize}
		\item[(i)] for all sequences $\{\psi_m\}_{m\in \mathbb{N}}\subset C_0^\infty((0,T)\times \Gamma)$ such that $\{u^{\psi_m}(T,\cdot)\}_{m\in \mathbb{N}}$ is bounded in $L^2(M)$, there is $C>0$ satisfying 
		\begin{equation*}
			|(v_l(\cdot), u^{\psi_m}(T,\cdot))_{L^2(M)}| \leq C \qquad \text{for all } l,m\in \mathbb{N}.
		\end{equation*}
	\item[(ii)] $\lim_{l\rightarrow\infty}(v_l(\cdot), u^{\psi}(T,\cdot))_{L^2(M)} = 0$ for all $\psi \in C_0^\infty((0,T)\times \Gamma)$.
	\end{itemize}
\end{lemma}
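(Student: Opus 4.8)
The plan is to reduce everything to soft functional analysis using Lemma~\ref{applemma}. Since $T > \max_{x\in M} d_g(x,\Gamma)$ we have $M(\Gamma,T) = M$, so Lemma~\ref{applemma} says that
\[
D := \mathcal{B}(\Gamma,T) = \left\{u^{\psi}(T,\cdot)\ :\ \psi\in C_0^\infty((0,T)\times\Gamma)\right\}
\]
is a dense linear subspace of $L^2(M)$ (it is linear because $\psi\mapsto u^{\psi}$ is linear and $C_0^\infty((0,T)\times\Gamma)$ is a vector space). With this, conditions (i) and (ii) read, respectively, ``the pairings of the $v_l$ against norm-bounded sequences in $D$ are uniformly bounded'' and ``$v_l$ tests to zero against every element of $D$'', and the lemma becomes the statement that testing against a dense subspace detects weak convergence, provided one also knows boundedness.

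For the forward implication, assume $v_l\rightharpoonup 0$ in $L^2(M)$. Then (ii) is immediate since $u^{\psi}(T,\cdot)\in L^2(M)$. For (i), a weakly convergent sequence in a Hilbert space is norm-bounded (the functionals $(v_l,\cdot)$ converge pointwise, hence are pointwise bounded, and Banach--Steinhaus applies), say $\|v_l\|_{L^2(M)}\leq K$; then for any sequence $\{\psi_m\}$ with $\|u^{\psi_m}(T,\cdot)\|_{L^2(M)}\leq B$ one gets $|(v_l,u^{\psi_m}(T,\cdot))|\leq KB$, so $C = KB$ works.

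For the converse, assume (i) and (ii). The main point is to deduce from (i) that $\{v_l\}$ is bounded in $L^2(M)$. Suppose not; passing to a subsequence, $\|v_{l_k}\|_{L^2(M)}\geq k$. Since $D$ is a dense linear subspace, $\sup\{|(v_{l_k},g)|\ :\ g\in D,\ \|g\|_{L^2(M)}\leq 1\} = \|v_{l_k}\|_{L^2(M)}$, so we may choose $\psi_k\in C_0^\infty((0,T)\times\Gamma)$ with $\|u^{\psi_k}(T,\cdot)\|_{L^2(M)}\leq 1$ and $|(v_{l_k},u^{\psi_k}(T,\cdot))|\geq k/2$. The sequence $\{u^{\psi_k}(T,\cdot)\}_{k\in\mathbb{N}}$ is bounded in $L^2(M)$, so (i) gives a constant $C$ with $|(v_l,u^{\psi_k}(T,\cdot))|\leq C$ for all $l,k$; taking $l = l_k$ contradicts $|(v_{l_k},u^{\psi_k}(T,\cdot))|\geq k/2\to\infty$. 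Hence $\|v_l\|_{L^2(M)}\leq K$ for some $K$.

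Finally, with $\{v_l\}$ norm-bounded, a three-$\varepsilon$ argument upgrades (ii) to weak convergence: given $w\in L^2(M)$ and $\varepsilon>0$, pick $g\in D$ with $K\|w-g\|_{L^2(M)}<\varepsilon/2$ by density, and then $l_0$ with $|(v_l,g)|<\varepsilon/2$ for $l\geq l_0$ by (ii); then $|(v_l,w)|\leq K\|w-g\|_{L^2(M)} + |(v_l,g)| < \varepsilon$ for $l\geq l_0$, so $v_l\rightharpoonup 0$. The only non-routine step is the boundedness argument in the converse, which is exactly what condition (i) is engineered to supply; the one thing to be careful about is that $D$ is a genuine linear subspace, so that near-norm-attaining unit vectors can be extracted from it, and that Lemma~\ref{applemma} applies, which is where the hypothesis $T>\max_{x\in M} d_g(x,\Gamma)$ enters.
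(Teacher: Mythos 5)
Your proof is correct, and its overall skeleton matches the paper's: use Lemma~\ref{applemma} with $M(\Gamma,T)=M$ to get density of $\mathcal{B}(\Gamma,T)$, extract norm-boundedness of $\{v_l\}$ from (i), and finish with the standard approximation ($\varepsilon/2+\varepsilon/2$) argument using (ii). The one place where you genuinely diverge is the boundedness step. The paper fixes an arbitrary $\omega\in L^2(M)$, approximates it by $u^{\psi_m}(T,\cdot)$, and deduces from (i) that $\sup_l |(v_l,\omega)|<\infty$ for every $\omega$; its terse claim ``this implies that $\{v_l\}$ is bounded'' is then an (implicit) appeal to the uniform boundedness principle. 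You instead argue by contradiction: using that $\mathcal{B}(\Gamma,T)$ is a dense \emph{linear} subspace (linearity of $\psi\mapsto u^\psi$), you pick near-norm-attaining unit vectors $u^{\psi_k}(T,\cdot)$ in it with $|(v_{l_k},u^{\psi_k}(T,\cdot))|\geq k/2$, and a single application of (i) to this bounded sequence yields the contradiction. Your variant is slightly more self-contained at that step (Banach--Steinhaus appears only in the easy forward direction, where both arguments need it to bound a weakly convergent sequence), at the mild cost of having to note and use the linear-subspace structure of $\mathcal{B}(\Gamma,T)$, which the paper's route does not require. Both arguments are valid; nothing is missing.
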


\begin{proof}
	Assume that $\{v_l\}_{l\in \mathbb{N}}$ converges to zero weakly in $L^2(M)$. Then, $(ii)$ follows from the definition and $(i)$ follows from the fact that a weakly convergent sequence is bounded. 
	
	Conversely, assume that conditions $(i)$ and $(ii)$ hold. Let $\omega\in L^2(M)$. Since $T > \max_{x\in M} d_g(x,\Gamma)$, we have that $\omega\in M(\Gamma,T)$. Hence, by Lemma \ref{applemma}, there is a sequence of sources $\{\psi_m\}_{m\in \mathbb{N}} \subset C_0^\infty((0,T)\times \Gamma )$ such that $\{u^{\psi_m}(T,\cdot)\}_{m\in \mathbb{N}}$ converges to $\omega$ in $L^2(M)$. Therefore, by assumption $(i)$,
	\begin{equation*}
		|(v_l,\omega)_{L^2(M)}|= \lim_{m\rightarrow\infty} |(v_l(\cdot),u^{\psi_m}(T,\cdot))_{L^2(M)}|.
	\end{equation*}
	This implies that $\{v_l\}_{l\in \mathbb{N}}$ is bounded in $L^2(M)$. Next, we write
	\begin{equation*}
		|(v_l,\omega)_{L^2(M)}| \leq \sup_{l\in \mathbb{N}} \|v_l\|_{L^2(M)} \| \omega(\cdot) - u^{\psi_m}(T,\cdot)\|_{L^2(M)} + |(v_l,u^{\psi_m}(T,\cdot))_{L^2(M)}|.
	\end{equation*}
	Consequently, taking $m$ and $l$ large enough, we can make the right-hand side as small as we need. This implies that $\{v_l\}_{l\in \mathbb{N}}$ weakly converges to zero.
\end{proof}

Let $\{\lambda_j\}_{j\in \mathbb{N}}$ be the eigenvalues of Laplace operator on $M$ indexed with the increasing order. We denote by $\{K_j\}_{j\in \mathbb{N}}$ the corresponding multiplicities. Let 
$$\{\{\phi_{jk}\}_{k=1}^{K_j}\}_{j\in\mathbb{N}}$$
the orthonormal basis of eigenvalues numerated such that $\{\phi_{jk}\}_{k=1}^{K_j}$ correspond to eigenvalue $\lambda_j$. Finally, we set
\begin{equation*}
	E_j := \textrm{span}\{\phi_{jk}\left.\right|_{\mathcal{S}}: \; k=1,\cdots,K_j\}.
\end{equation*}

Next, we prove an auxiliary lemma, which will be used later.
\begin{lemma}\label{scalprod}
	Let $\psi \in C_0^\infty((0,\infty)\times\mathcal{S})$, then we have
	\begin{equation*}
		\left(u^\psi(T,\cdot), \phi_{jk}(\cdot)\right)_{L^2(M)} = \int_{0}^{T} \int_M s_j(t)\psi(t,x) \phi_{jk}(x)dV(x)dt
	\end{equation*}
where $s_j(t):= \sin\left(\sqrt{\lambda_j}(T - t)\right)/\sqrt{\lambda_j}$.
\end{lemma}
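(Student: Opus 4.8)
The plan is to compute $u^\psi(T,\cdot)$ via spectral decomposition and then extract the $\phi_{jk}$-coefficient by Fourier's method (Duhamel's principle applied mode-by-mode). Writing $u^\psi(t,\cdot) = \sum_{j,k} u_{jk}(t)\phi_{jk}$, the wave equation \eqref{wave eq} together with the zero initial conditions forces each coefficient $u_{jk}(t) = \left(u^\psi(t,\cdot),\phi_{jk}\right)_{L^2(M)}$ to solve the scalar ODE $u_{jk}''(t) + \lambda_j u_{jk}(t) = \psi_{jk}(t)$, $u_{jk}(0) = u_{jk}'(0) = 0$, where $\psi_{jk}(t) := \left(\psi(t,\cdot),\phi_{jk}\right)_{L^2(M)} = \int_M \psi(t,x)\phi_{jk}(x)\,dV(x)$. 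By Duhamel's formula the solution is
\begin{equation*}
	u_{jk}(t) = \int_0^t \frac{\sin\!\left(\sqrt{\lambda_j}(t-\tau)\right)}{\sqrt{\lambda_j}}\,\psi_{jk}(\tau)\,d\tau.
\end{equation*}
Evaluating at $t = T$ and recalling that $\psi$ is supported in $(0,T)\times\mathcal{S}$ so that $\psi_{jk}(\tau)$ vanishes for $\tau\notin(0,T)$, the upper limit can be taken to be $T$, which yields exactly $u_{jk}(T) = \int_0^T s_j(\tau)\psi_{jk}(\tau)\,d\tau$ with $s_j(\tau) = \sin\!\left(\sqrt{\lambda_j}(T-\tau)\right)/\sqrt{\lambda_j}$. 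Substituting the definition of $\psi_{jk}$ and using Fubini gives the claimed identity.

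First I would justify the spectral expansion rigorously: since $\psi \in C_0^\infty((0,\infty)\times\mathcal{S})$, the source $t\mapsto\psi(t,\cdot)$ is a smooth compactly supported curve in $C^\infty(M)\subset L^2(M)$, so standard semigroup/spectral theory for $\Delta_g$ on the compact manifold $M$ guarantees that $u^\psi$ is smooth and that the series $\sum_{j,k} u_{jk}(t)\phi_{jk}$ converges in, say, $C([0,T];L^2(M))$ (indeed in every Sobolev space, with rapidly decaying coefficients because $\psi_{jk}(t) = O(\lambda_j^{-N})$ uniformly for any $N$). This legitimises taking the $L^2(M)$ inner product of the expansion with $\phi_{jk}$ term by term, and also legitimises differentiating in $t$ under the sum to derive the scalar ODE. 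Alternatively, one can avoid the series entirely: define $w(t,x) := \sum_{j,k}\left(\int_0^t s_j^{(t)}(\tau)\psi_{jk}(\tau)\,d\tau\right)\phi_{jk}(x)$ directly, check it solves \eqref{wave eq}, and invoke uniqueness; but the term-by-term argument is cleaner given what the paper has set up.

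The only genuine subtlety — and it is minor — is bookkeeping with the support of $\psi$ in $t$: one wants the integral to run over $(0,T)$ rather than $(0,t)$ evaluated at $t=T$, and this is immediate since $T$ is the right endpoint and $\psi(t,\cdot) = 0$ at $t = T$ anyway (compact support in the open interval). A second cosmetic point is the case $\lambda_0 = 0$ (the constant eigenfunction when $M$ has no boundary), where $\sin(\sqrt{\lambda_j}(T-t))/\sqrt{\lambda_j}$ must be read as its limit $T-t$; the Duhamel formula $u_0''=\psi_0$ gives $u_0(T) = \int_0^T (T-\tau)\psi_0(\tau)\,d\tau$, consistent with the convention $s_0(t) = T-t$. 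I do not anticipate any real obstacle here — this is a standard Fourier-method computation — so the main effort is simply presenting the Duhamel step and the Fubini interchange cleanly, and citing the smoothness/convergence of the eigenfunction expansion. In fact, one can shortcut even the convergence discussion by noting that, since $u^\psi(T,\cdot)\in L^2(M)$, its inner product against the single fixed basis vector $\phi_{jk}$ is well defined, and then arguing directly at the level of that scalar quantity using that $t\mapsto\left(u^\psi(t,\cdot),\phi_{jk}\right)$ is $C^2$ and solves the stated ODE (which follows by pairing \eqref{wave eq} with $\phi_{jk}$ and integrating by parts in $x$, using $\Delta_g\phi_{jk} = -\lambda_j\phi_{jk}$); this is the most economical route and is the one I would write up.
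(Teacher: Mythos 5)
Your proposal is correct, and the ``economical route'' you settle on --- pairing the wave equation with the fixed eigenfunction $\phi_{jk}$, using $\Delta_g\phi_{jk}=-\lambda_j\phi_{jk}$ to obtain the scalar ODE with zero initial data, and solving it by Duhamel --- is exactly the argument the paper gives. Your remarks on the support of $\psi$ and on reading $s_j$ as $T-t$ when $\lambda_j=0$ are harmless refinements of the same proof.
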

\begin{proof}
	We check
	\begin{align*}
		\partial_t^2 	\left(u^\psi(t,\cdot), \phi_{jk}(\cdot)\right)_{L^2(M)} &= \int_{M} \left(\Delta_g u^\psi(t,x) + f(t,x)\right)\phi_{jk}(x)dV(x)\\
		& = \lambda_j \int_M u^\psi(t,x)\phi_{jk}(x)dV(x) + \int_{M} \psi(t,x)\phi_{jk}dV(x).
	\end{align*}
	Moreover, using initial conditions, we know that
	\begin{equation*}
		\left(u^\psi(0,\cdot), \phi_{jk}(\cdot)\right)_{L^2(M)} = 0, \qquad \partial_t\left(u^\psi(0,\cdot), \phi_{jk}(\cdot)\right)_{L^2(M)} = 0. 
	\end{equation*}
	Solving the one-dimensional Helmholtz equation, we obtain
	\begin{equation*}
		\left(u^\psi(t,\cdot), \phi_{jk}(\cdot)\right)_{L^2(M)}  = \int_{0}^{t} \int_M s_j(t)\psi(t,x) \phi_{jk}(x)dV(x)dt.
	\end{equation*}
	By choosing $t=T$, we obtain the identity.
\end{proof}
The meromorphic mapping of the Fourier transform of the operator $\Lambda_{\mathcal{S,R}}$ with respect to the time variable enables the determination of eigenvalues $\{\lambda_j\}_{j\in \mathbb{N}}$ and spaces $\{E_j\}_{j\in \mathbb{N}}$ through the analysis of its poles and residues. This and Lemmas \ref{B identity} and \ref{scalprod}, allows us to check $L^2$-boundedness of the sequence of waves:
\begin{proposition}\label{L2boundedness}
	Let $T>0$ and $\{\psi_m\}_{m\in \mathbb{N}} \subset C_0^\infty((0,\infty)\times\mathcal{S})$. Assume that there is $C_0> 0$ such that
	\begin{equation*}
		1 < C_0 \|\phi_{jk}\|_{L^2(\mathcal{S})}, \quad \text{for } j\in \mathbb{N} \text{ and } k=1,\cdots, K_j.
	\end{equation*}
	Then the following conditions are equivalent:
	\begin{itemize}
		\item[(i)] The sequence $\{u^{\psi_m}(T,\cdot)\}_{m\in \mathbb{N}}$ is bounded in $L^2(M)$.\\
		\item[(ii)] For all $C_1>0$ and $\{e_j\}_{j\in N}\subset C^\infty(\mathcal{S})$ satisfying
		\begin{equation}\label{ej}
			e_j\in E_j, \qquad \|e_j\|_{L^2(\mathcal{S})} \leq C_1,
		\end{equation} 
	there is $C_2 > 0$ such that 
	\begin{equation*}
		\sum_{j=1}^{\infty} \left(\int_{0}^{T}\int_{M}s_j(t)\psi_m(t,x)e_j(x)dV(x)dt\right)^2 \leq C_2,
	\end{equation*}
for all $m\in \mathbb{N}$.
	\end{itemize}
\end{proposition}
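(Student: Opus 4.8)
The plan is to reduce the equivalence to the uniform boundedness principle, with the spectral bound \eqref{spec_cond} entering only through a comparison of norms. The first step is to pass to the spectral side. By Lemma~\ref{scalprod}, for every $\psi\in C_0^\infty((0,\infty)\times\mathcal{S})$,
\[
\beta_{jk}(\psi):=\left(u^\psi(T,\cdot),\phi_{jk}\right)_{L^2(M)}=\int_0^T\int_M s_j(t)\psi(t,x)\phi_{jk}(x)\,dV(x)\,dt ,
\]
so by Parseval's identity $\|u^\psi(T,\cdot)\|_{L^2(M)}^2=\sum_{j=1}^\infty|\beta_j(\psi)|^2$ with $\beta_j(\psi):=(\beta_{j1}(\psi),\dots,\beta_{jK_j}(\psi))\in\mathbb{R}^{K_j}$. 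Next, writing $e_j\in E_j$ as $e_j=\sum_{k}a_{jk}\phi_{jk}|_{\mathcal{S}}$ and using that $\psi_m(t,\cdot)$ is supported in $\mathcal{S}$, the quantity in (ii) becomes $\int_0^T\int_M s_j(t)\psi_m(t,x)e_j(x)\,dV(x)\,dt=a_j\cdot\beta_j(\psi_m)$ with $a_j=(a_{j1},\dots,a_{jK_j})$; note this depends only on $e_j$. Thus (i) asks whether $\sum_j|\beta_j(\psi_m)|^2$ is bounded in $m$, while (ii) asks whether $\sum_j(a_j\cdot\beta_j(\psi_m))^2$ is bounded in $m$ for every admissible family $\{e_j\}$.

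The second step translates the constraint ``$\|e_j\|_{L^2(\mathcal{S})}\le C_1$ for all $j$'' into a constraint on the coefficient vectors. If $\phi:=\sum_k a_{jk}\phi_{jk}$ is the eigenfunction with $\phi|_{\mathcal{S}}=e_j$, then $|a_j|=\|\phi\|_{L^2(M)}$ and $\|e_j\|_{L^2(\mathcal{S})}=\|\phi\|_{L^2(\mathcal{S})}$, so $\|e_j\|_{L^2(\mathcal{S})}\le|a_j|$ always, while applying \eqref{spec_cond} to the normalized eigenfunction $\phi/\|\phi\|_{L^2(M)}$ (i.e.\ to an arbitrary unit vector of the eigenspace $V_j$) gives $|a_j|\le C_0\|e_j\|_{L^2(\mathcal{S})}$. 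In particular $\{\phi_{jk}|_{\mathcal{S}}\}_{k=1}^{K_j}$ is linearly independent, the vector $a_j$ is determined by $e_j$, and a family $\{e_j\}$ is bounded in $L^2(\mathcal{S})$ precisely when the associated sequence $a=(a_j)_j$ lies in the Banach space $X:=\{(a_j)_j:\ a_j\in\mathbb{R}^{K_j},\ \|a\|_X:=\sup_j|a_j|<\infty\}$, with comparable norms; moreover every element of $X$ arises this way. For each $m$ put $T_m\colon X\to\ell^2(\mathbb{N})$, $T_m(a)=(a_j\cdot\beta_j(\psi_m))_j$; by Cauchy--Schwarz $\|T_m(a)\|_{\ell^2}\le\|a\|_X\,\|u^{\psi_m}(T,\cdot)\|_{L^2(M)}$, and the choice $a_j=\beta_j(\psi_m)/|\beta_j(\psi_m)|$ (with $a_j=0$ when $\beta_j(\psi_m)=0$) shows this is sharp, so $\|T_m\|_{X\to\ell^2}=\|u^{\psi_m}(T,\cdot)\|_{L^2(M)}$.

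The proof then concludes: condition (i) says $\sup_m\|T_m\|_{X\to\ell^2}<\infty$, and by the norm comparison of the previous step condition (ii) says exactly that the family $\{T_m\}_m$ is pointwise bounded on $X$; since $X$ is a Banach space these are equivalent by Banach--Steinhaus. The implication (i)$\Rightarrow$(ii) is elementary, being the Cauchy--Schwarz bound $\sum_j(a_j\cdot\beta_j(\psi_m))^2\le(\sup_j|a_j|)^2\|u^{\psi_m}(T,\cdot)\|_{L^2(M)}^2$; the real content is in (ii)$\Rightarrow$(i), which is the uniform boundedness principle. I expect the only genuine obstacle to be that second step: choosing the right Banach space $X$ parametrizing the admissible families $\{e_j\}$, and verifying — via \eqref{spec_cond} applied within each eigenspace — that boundedness of $\{e_j\}$ in $L^2(\mathcal{S})$ corresponds to boundedness of $a$ in $X$ with the constant $C_0$, so that $\|T_m\|$ is genuinely $\|u^{\psi_m}(T,\cdot)\|_{L^2(M)}$. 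Once this is set up, the functional-analytic core is a single invocation of Banach--Steinhaus.
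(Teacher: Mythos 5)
Your proof is correct, and it uses the same two main ingredients as the paper — the spectral representation of Lemma~\ref{scalprod} together with Parseval, and the bound \eqref{spec_cond} to compare $\|e_j\|_{L^2(\mathcal S)}$ with the Euclidean norm of the coefficient vector $a_j$ (giving $\|e_j\|_{L^2(\mathcal S)}\le |a_j|\le C_0\|e_j\|_{L^2(\mathcal S)}$) — but it packages the hard direction differently. The paper proves (i)$\Rightarrow$(ii) by rotating the basis of each eigenspace so that $e_j=c_j\phi_{j1}|_{\mathcal S}$ with $c_j\le C_0C_1$, which is your Cauchy--Schwarz estimate in vector notation; for (ii)$\Rightarrow$(i) it fixes an arbitrary $v\in L^2(M)$, chooses the $v$-adapted family $e_j=P_jv/\|P_jv\|_{L^2(M)}|_{\mathcal S}$, obtains $|(u^{\psi_m}(T,\cdot),v)|\le \sqrt{C_2(v)}\,\|v\|$, and concludes from ``$v$ arbitrary'' that the sequence is bounded, i.e.\ it invokes (implicitly) the fact that weakly bounded sets in $L^2(M)$ are norm bounded. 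You instead apply Banach--Steinhaus on the Banach space $X$ of uniformly bounded coefficient sequences, after the nice observation that $\|T_m\|_{X\to\ell^2}=\|u^{\psi_m}(T,\cdot)\|_{L^2(M)}$ exactly (the extremal choice $a_j=\beta_j(\psi_m)/|\beta_j(\psi_m)|$); this avoids any basis rotation and any $v$-dependent choice of $\{e_j\}$, at the cost of verifying that bounded families $\{e_j\}$ correspond isomorphically to elements of $X$ — which is precisely where \eqref{spec_cond} enters, as in the paper. One point to note: like the paper, you need the spectral bound for \emph{every} normalized eigenfunction in each eigenspace (equivalently, for every orthonormal basis, which is how \eqref{spec_cond} is stated in the introduction), not merely for the one basis $\{\phi_{jk}\}$ fixed in the proposition; the paper uses this implicitly through ``by rotating the basis,'' and you use it explicitly when applying \eqref{spec_cond} to $\phi/\|\phi\|_{L^2(M)}$ and when deducing linear independence of $\{\phi_{jk}|_{\mathcal S}\}_{k=1}^{K_j}$ — so this is consistent with the paper's intended hypothesis rather than a gap.
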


\begin{proof}
	Assume that (i) holds and let $\{e_j\}_{j\in\mathbb{N}}$ be a sequence satisfying \eqref{ej}. Without loss of generality, we may assume that 
	\begin{equation*}
		e_j = \left.c_j \phi_{j1}\right|_{\mathcal{S}},
	\end{equation*}
for some constant $c_j > 0$. By hypothesis, we estimate
\begin{equation*}
		\frac{c_j}{C_0} \leq c_j\|\phi_{j1}\|_{L^2(M)} = \|e_j\|_{L^2(\mathcal{S})}\leq C_1.
\end{equation*}
so that,
\begin{equation*}
	c_j\leq C_0C_1 \quad \text{for } j\in \mathbb{N}. 
\end{equation*}
Using Lemma \ref{scalprod}, we write
\begin{multline}
	\sum_{j=1}^{\infty} \left(\int_{0}^{T}\int_{M}s_j(t)\psi_m(t,x)e_j(x)dV(x)dt\right)^2 = \sum_{j=1}^{\infty} c_j^2 \left(u^{\psi_m}(T,\cdot), \phi_{j1}(\cdot)\right)_{L^2(M)}^2\\
	\leq (C_0C_1)^2\left\|u^{\psi_m}(T,\cdot))\right\|_{L^2(M)}^2.
\end{multline}

Conversely, assume that (ii) holds. Let $v\in L^2(M)$. Let $P_j$ be the orthogonal projection onto the $j$th eigenspace. By rotating the basis, we may assume that 
\begin{equation*}
	\phi_{j1} = \frac{P_jv}{\|P_jv\|_{L^2(M)}}.
\end{equation*}
Let us choose $e_j = \phi_{j1}$, then we estimate
\begin{equation}
	\|e_j\|_{L^2(\mathcal{S})} =  \|\phi_{j1}\|_{L^2(\mathcal{S})} \leq  \|\phi_{j1}\|_{L^2(M)} = 1,
\end{equation}
so that $\{e_j\}_{j\in \mathbb{N}}$ satisfies the conditions of (ii).  Therefore, by using Lemma \ref{scalprod},
\begin{align*}
	|(u^{\psi_m}(T,\cdot), v)|^2 &= \left|\sum_{j=1}^{\infty}(v,\phi_{j1})_{L^2(M)}\left(u^{\psi_m}(T,\cdot),\phi_{j1}(\cdot)\right)_{L^2(M)}\right|^2\\
	& \leq \sum_{j=1}^{\infty}(v,\phi_{j1})_{L^2(M)}^2 \sum_{j=1}^{\infty} \left(\int_{0}^{T}\int_{M}s_j(t)\psi_m(t,x)e_j(x)dV(x)dt\right)^2\\
	& \leq C_2 \|v\|_{L^2(M)}^2.
\end{align*}
Since $v$ was arbitrary, this implies that $\{u^{\psi_m}(T,\cdot)\}_{m\in \mathbb{N}}$ is bounded in $L^2(M)$.
\end{proof}

\section{Reconstruction of the manifold}\label{Sec_rec_mon}
In this section, we show that $\Lambda_{\mathcal{S}, \mathcal{R}}$ determines $(M,g)$. This is accomplished by establishing the distance function, which will be specified at a later point. With the determination of the distance function, the complete manifolds can then be reconstructed using the procedures outlined in Section 3.8 in \cite{KatchalovKurylevLassas}. 

\subsection{From the weakly convergent sequence of waves to the relation between the domain of influences}
\begin{lemma}\label{ver_intersection}
	Let $T>0$, $K\in \mathbb{N}$, and $\{\Gamma_k\}_{k=0}^{K}$ be open non-empty sets in $M$. Let $\{s_k\}_{k=0}^{K}$ be positive numbers not exceeding $T$. Then, the following properties are equivalent
	\begin{itemize}
		\item[(i)] $M(\Gamma_0,s_0)\subset \bigcup_{k=1}^K M(\Gamma_k,s_k)$.
		\item[(ii)] For any $f_0\in C_0^{\infty}((T-s_0,T)\times \Gamma_0)$ there exists a sequence
    \begin{equation*}
        \{f_j\}_{j\in\mathbb{N}}\subset  C_0^{\infty}\left(\bigcup_{k=1}^{K} (T-s_k, T)\times \Gamma_k\right)
    \end{equation*}
    such that 
		\begin{equation*}
			u^{f_j}(T,\cdot) \rightarrow u^{f_0}(T,\cdot)
		\end{equation*}
	weakly in $L^2(M)$ as $j\rightarrow \infty$.
	\item[(ii')] For any $f_0\in C_0^{\infty}((T-s_0,T)\times \Gamma_0)$ there exists a sequence
    \begin{equation*}
        \{f_j\}_{j\in\mathbb{N}}\subset  C_0^{\infty}\left(\bigcup_{k=1}^{K} (T-s_k, T)\times \Gamma_k\right)
    \end{equation*}
    such that 
		\begin{equation*}
			u^{f_j}(T,\cdot) \rightarrow u^{f_0}(T,\cdot)
		\end{equation*}
	strongly in $L^2(M)$ as $j\rightarrow \infty$.
	\end{itemize}
\end{lemma}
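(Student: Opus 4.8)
The plan is to prove the chain of implications $(ii')\Rightarrow(ii)\Rightarrow(i)\Rightarrow(ii')$, since $(ii')\Rightarrow(ii)$ is trivial (strong convergence implies weak convergence), and the content lies in the other two implications, which connect the geometry of domains of influence with approximation properties of waves.

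For $(i)\Rightarrow(ii')$, I would argue as follows. Fix $f_0\in C_0^\infty((T-s_0,T)\times\Gamma_0)$. By finite speed of propagation, $u^{f_0}(T,\cdot)$ is supported in $M(\Gamma_0,s_0)$; more precisely, a source active only on the time interval $(T-s_0,T)$ produces, at time $T$, a wave supported in $M(\Gamma_0,s_0)$. The inclusion in $(i)$ then says $u^{f_0}(T,\cdot)\in L^2(M(\Gamma_0,s_0))\subset L^2\big(\bigcup_{k=1}^K M(\Gamma_k,s_k)\big)$. The key point is that the approximate controllability statement of Lemma \ref{applemma}, applied with the time window $(T-s_k,T)$ of length $s_k$ and observation set $\Gamma_k$, shows that $\{u^f(T,\cdot):f\in C_0^\infty((T-s_k,T)\times\Gamma_k)\}$ is dense in $L^2(M(\Gamma_k,s_k))$ — here one uses time-translation invariance of the wave equation to reduce the window $(T-s_k,T)$ to $(0,s_k)$. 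Summing over $k$, the span $\sum_{k=1}^K\{u^f(T,\cdot):f\in C_0^\infty((T-s_k,T)\times\Gamma_k)\}$ is dense in $\sum_{k=1}^K L^2(M(\Gamma_k,s_k))$, and since the sets $M(\Gamma_k,s_k)$ are closed subsets of $M$, the closure of this sum of subspaces is exactly $L^2\big(\bigcup_{k=1}^K M(\Gamma_k,s_k)\big)$. Therefore there is a sequence $f_j\in C_0^\infty\big(\bigcup_{k=1}^K(T-s_k,T)\times\Gamma_k\big)$ with $u^{f_j}(T,\cdot)\to u^{f_0}(T,\cdot)$ strongly in $L^2(M)$, which is $(ii')$. (A minor point to check carefully: density of a finite sum of closed subspaces $\sum V_k$ in $L^2(\bigcup A_k)$ when each $V_k$ is dense in $L^2(A_k)$; this is standard via a partition-of-unity argument, approximating $\phi\in L^2(\bigcup A_k)$ by $\sum\chi_k\phi$ with $\chi_k\phi\in L^2(A_k)$.)

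For $(ii)\Rightarrow(i)$ I argue by contraposition. Suppose $(i)$ fails, so there is a point $x_0\in M(\Gamma_0,s_0)\setminus\bigcup_{k=1}^K M(\Gamma_k,s_k)$. Since $\bigcup_{k=1}^K M(\Gamma_k,s_k)$ is closed, there is an open neighbourhood $U$ of $x_0$ disjoint from it, and $U\cap M(\Gamma_0,s_0)$ has positive measure and nonempty interior. By Lemma \ref{applemma} (with time window $(T-s_0,T)$ and set $\Gamma_0$), there exists $f_0\in C_0^\infty((T-s_0,T)\times\Gamma_0)$ such that $u^{f_0}(T,\cdot)$ has nonzero component in $L^2(U)$, i.e. the projection $\chi_U u^{f_0}(T,\cdot)\neq 0$ — indeed the set of such waves is dense in $L^2(M(\Gamma_0,s_0))$, so it cannot be contained in the proper closed subspace $\{v:\chi_U v=0\}$. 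Now take any $f_0$ with this property and suppose for contradiction there is a sequence $f_j\in C_0^\infty\big(\bigcup_{k=1}^K(T-s_k,T)\times\Gamma_k\big)$ with $u^{f_j}(T,\cdot)\rightharpoonup u^{f_0}(T,\cdot)$ weakly in $L^2(M)$. By finite speed of propagation each $u^{f_j}(T,\cdot)$ is supported in $\bigcup_{k=1}^K M(\Gamma_k,s_k)$, hence $\chi_U u^{f_j}(T,\cdot)=0$ for all $j$. Since $v\mapsto\chi_U v$ is a bounded linear operator on $L^2(M)$, it is weakly continuous, so $\chi_U u^{f_0}(T,\cdot)=0$, contradicting the choice of $f_0$. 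This contradiction shows $(ii)$ fails, completing the contrapositive.

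The main obstacle I anticipate is the careful handling of the time windows and of the finite-speed-of-propagation statements: Lemma \ref{applemma} is stated for sources in $C_0^\infty((0,T)\times\Gamma)$ producing waves dense in $L^2(M(\Gamma,T))$, whereas here I need sources supported in the window $(T-s_k,T)$ producing, at the final time $T$, waves dense in $L^2(M(\Gamma_k,s_k))$. Reconciling these requires invoking time-translation invariance of the wave equation and checking that a source switched on only during the last $s_k$ units of time influences, at time $T$, exactly the set $\{x:d_g(x,\Gamma_k)\le s_k\}$ — this is the "finite speed of propagation" sharp statement, and it must be used in both directions (to place the support of $u^{f_j}(T,\cdot)$, and to get density). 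A secondary technical point, already noted above, is the lemma that a finite sum of subspaces, each dense in $L^2$ of a closed set, is dense in $L^2$ of the union; this should be disposed of quickly with a partition of unity subordinate to the finite cover.
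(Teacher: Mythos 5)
Your proposal is correct and follows essentially the same route as the paper: finite speed of propagation plus the approximate controllability of Lemma \ref{applemma} for $(i)\Rightarrow(ii')$, and a contrapositive argument for $(ii)\Rightarrow(i)$ that picks an open set $U$ in $M(\Gamma_0,s_0)\setminus\bigcup_k M(\Gamma_k,s_k)$ and tests the would-be weak limit against $1_U$ (the paper pairs with $1_U$ directly, which is the same as your weak continuity of $v\mapsto\chi_U v$). The extra details you flag (time-translation to handle the windows $(T-s_k,T)$ and decomposing $L^2$ of the union over the closed sets $M(\Gamma_k,s_k)$, which can be done exactly with indicators of the disjointified sets rather than a partition of unity) are points the paper leaves implicit, not a different approach.
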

\begin{proof}
	Assume that $(i)$ holds. By the finite speed of the wave propagation, $u^{f_0}(T, \cdot)$ is supported in $M(\Gamma_0,s_0)$, and hence, Lemma \ref{applemma} implies $(ii)$ and $(ii')$.
	
	It remains to show that $(ii)$ implies $(i)$. Assume that $(i)$ does not hold, so that 
    \begin{equation*}
        M(\Gamma_0,s_0) \not\subset \bigcup_{k=1}^K M(\Gamma_k,s_k).
    \end{equation*}
    Moreover, since $\bigcup_{k=1}^K M(\Gamma_k,s_k)$ is a closed set, it follows that
    \begin{equation}\label{difference}
        M(\Gamma_0,s_0)^{int} \not\subset \bigcup_{k=1}^K M(\Gamma_k,s_k),
    \end{equation}
    where
	\begin{equation*}
		M(\Gamma_0,s_0)^{int} := \left\{ x\in M: d_g(x,\Gamma_0) < s \right\}.
	\end{equation*}
	Therefore, the difference \eqref{difference} is non-empty. Moreover, since $M(\Gamma_0,s_0)^{int} $ is open and $\bigcup_{k=1}^K M(\Gamma_k,s_k)$ is closed, we know that the difference \eqref{difference} is an open set. Hence, we conclude that there exists an open non-empty set 
    \begin{equation*}
        U\subset M(\Gamma_0,s_0)\setminus M(\Gamma,s).
    \end{equation*}
    By Lemma \ref{applemma}, there exists $f_0\in C_0^{\infty}((T-s_0,T)\times \Gamma_0)$ such that
	\begin{equation}\label{ver_intersection_0}
		\left( u^{f_0}(T,\cdot), 1_{U}(\cdot) \right)_{L^2(M)} \neq 0.
	\end{equation}
    where $1_{U}(\cdot)$ the indicator function of the set $U$. Let $f \in C_0^{\infty}\left(\bigcup_{k=1}^{K} (T-s_k, T)\times \Gamma_k\right)$, then, by finite speed of the wave propagation $u^f(T, \cdot)$ is supported in $\bigcup_{k=1}^K M(\Gamma_k,s_k)$. Therefore, from \eqref{ver_intersection_0}, it follows that
	\begin{equation*}
		\left(u^{f_0}(T_0,\cdot) - u^{f}(T,\cdot)_{L^2(M)}, 1_{U}(\cdot)\right)_{L^2(M)}\neq 0,
	\end{equation*}
    and hence, $(ii)$ does not hold.
\end{proof}

Next, we prove the following auxiliary lemma:
\begin{lemma}\label{aux_openset_poin}
    Let $\Gamma_2$ be an open, non-empty set in $M$ with a smooth boundary. Let $y_0$, $y_1 \in \partial \Gamma_2$ and $s_0$, $s_1$, $s_2>0$. Then the following properties are equivalent:
    \begin{itemize}
        \item[(i)] $M(y_0, s_0)\subset M(y_1, s_1)\cup M(\Gamma_2, s_2)$.
        \item[(ii)] For any $\varepsilon>0$ there exists an open set $\Gamma_0\subset \Gamma_2$ such that $y_0\in \overline{\Gamma_0}$ and
        \begin{equation*}
            M(\Gamma_0, s_0)\subset M(\Gamma_1, s_1 + \varepsilon)\cup M(\Gamma_2, s_2 + \varepsilon)
        \end{equation*}
        for all $\Gamma_1\subset \Gamma_2$ open sets such that $y_1\in \overline{\Gamma_1}$.
    \end{itemize}
\end{lemma}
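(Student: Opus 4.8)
The plan is to prove the equivalence by showing both implications directly, relying on continuity of the domain-of-influence map with respect to shrinking the source set and the fact that $M(\Gamma,s) = \bigcup_{y\in\Gamma} M(y,s)$ together with $M(y,s) = \overline{\bigcup_{\Gamma\ni y} M(\Gamma,s')}$ behaviour under small perturbations of $s$.

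For the direction $(ii)\Rightarrow(i)$, I would argue by contraposition, or more simply directly: assume $(ii)$. Fix $\varepsilon>0$; by hypothesis there is an open $\Gamma_0\subset\Gamma_2$ with $y_0\in\overline{\Gamma_0}$ such that $M(\Gamma_0,s_0)\subset M(\Gamma_1,s_1+\varepsilon)\cup M(\Gamma_2,s_2+\varepsilon)$ for every open $\Gamma_1\subset\Gamma_2$ with $y_1\in\overline{\Gamma_1}$. Since $y_0\in\overline{\Gamma_0}$, a point $x$ with $d_g(x,y_0)<s_0$ lies in $M(\Gamma_0,s_0)^{int}$, hence $x\in M(\Gamma_1,s_1+\varepsilon)\cup M(\Gamma_2,s_2+\varepsilon)$; taking the intersection over a decreasing family of sets $\Gamma_1\downarrow\{y_1\}$ (more precisely, over balls $\Gamma_1 = B(y_1,\delta)\cap\Gamma_2$ which are non-empty open with $y_1\in\overline{\Gamma_1}$ since $y_1\in\partial\Gamma_2$) and using that $d_g(x,\Gamma_1)\to d_g(x,y_1)$ as $\delta\to 0$, we get $x\in M(y_1,s_1+\varepsilon)\cup M(\Gamma_2,s_2+\varepsilon)$. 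Letting $\varepsilon\to 0$ and using that these are closed sets and that $d_g(x,y_0)<s_0$ was arbitrary, we obtain $M(y_0,s_0)^{int}\subset M(y_1,s_1)\cup M(\Gamma_2,s_2)$, and then $M(y_0,s_0)\subset M(y_1,s_1)\cup M(\Gamma_2,s_2)$ by taking closures (the right-hand side is closed, the left-hand side is the closure of its interior since distance balls in a manifold have this property).

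For the direction $(i)\Rightarrow(ii)$, suppose $M(y_0,s_0)\subset M(y_1,s_1)\cup M(\Gamma_2,s_2)$ and fix $\varepsilon>0$. The idea is to choose $\Gamma_0 = B(y_0,r)\cap\Gamma_2$ for $r>0$ small; this is a non-empty open subset of $\Gamma_2$ with $y_0\in\overline{\Gamma_0}$ (again because $y_0\in\partial\Gamma_2$, so every ball around $y_0$ meets $\Gamma_2$). For such $\Gamma_0$ we have $M(\Gamma_0,s_0)\subset M(y_0,s_0+r)$, so it suffices to show $M(y_0,s_0+r)\subset M(\Gamma_1,s_1+\varepsilon)\cup M(\Gamma_2,s_2+\varepsilon)$ for all admissible $\Gamma_1$, uniformly in $\Gamma_1$. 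Combining the inclusion hypothesis with a uniform "thickening" estimate — namely $M(y_1,s_1)\subset M(\Gamma_1,s_1+\varepsilon)$ for every $\Gamma_1\subset\Gamma_2$ with $y_1\in\overline{\Gamma_1}$, since $d_g(z,\Gamma_1)\le d_g(z,y_1)$ for all $z$ — reduces the problem to showing $M(y_0,s_0+r)\subset M(y_1,s_1+\varepsilon)\cup M(\Gamma_2,s_2+\varepsilon)$ for $r$ small; and this follows from $(i)$ together with the elementary fact that if a compact set $A$ is covered by the union of two closed sets $B\cup C$, then any $\varepsilon$-neighbourhood of $A$ is covered by the $\varepsilon$-neighbourhoods $B_\varepsilon\cup C_\varepsilon$, applied with $A = M(y_0,s_0)$, $B = M(y_1,s_1)$, $C = M(\Gamma_2,s_2)$, choosing $r\le\varepsilon$.

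The main obstacle I anticipate is the careful handling of the interplay between \emph{open} and \emph{closed} domains of influence and the uniformity over the family of sets $\Gamma_1$: one must make sure that the approximation $\Gamma_1\downarrow\{y_1\}$ and $\Gamma_0\downarrow\{y_0\}$ genuinely produces the pointwise domains $M(y_i,s_i)$ in the limit, which uses that $y_0,y_1\in\partial\Gamma_2$ so that arbitrarily small open subsets of $\Gamma_2$ accumulating at $y_i$ exist, and the continuity $d_g(x,\Gamma_1)\to d_g(x,y_1)$ as $\Gamma_1$ shrinks to $y_1$. The "$\varepsilon$-slack" in $(ii)$ is exactly what makes both directions go through despite the open/closed mismatch, so the proof should be organized around exploiting that slack rather than trying to prove a sharp statement.
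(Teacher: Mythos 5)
Your proposal is correct and takes essentially the same approach as the paper: both directions rest on choosing small metric balls intersected with $\Gamma_2$ around $y_0$ (respectively $y_1$) and on the triangle-inequality thickening estimates ($d_g(\cdot,\Gamma_1)\le d_g(\cdot,y_1)$ when $y_1\in\overline{\Gamma_1}$, and the $\varepsilon$-slack absorbing the passage between point and small-set domains of influence). The only cosmetic difference is that the paper proves (ii)$\Rightarrow$(i) by contraposition, exhibiting a witness point $z$ at positive distance from $M(y_1,s_1)\cup M(\Gamma_2,s_2)$, whereas you argue directly by shrinking $\Gamma_1$ and letting $\varepsilon\to 0$, then closing up; both hinge on the same estimates.
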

\begin{proof}
    Assume that $(i)$ is valid. Let $\varepsilon>0$ and 
    $$\Gamma_0 = \{x\in \Gamma_2: d_g(x, y_0) < \varepsilon/2\}.$$ 
    Then
    \begin{equation*}
        M(\Gamma_0, s_0)\subset M(\Gamma_1, s_1 + \varepsilon)\cup M(\Gamma_2, s_2 + \varepsilon)
    \end{equation*}
    holds for any $\Gamma_1\subset \Gamma_2$ open neighbourhood of $y_1$.

    Next, assume that $(i)$ is not valid. Then, there is $z\in M(y_0,s_0)$ such that
    \begin{equation*}
        \varepsilon = \frac{1}{3} d_g\big(z, M(y_1, s_1)\cup M(\Gamma_2, s_2)\big) >0.
    \end{equation*}
    We set 
    $$\Gamma_1 = \{x\in \Gamma_2: \; d_g(x,y_1)<\varepsilon\},$$ 
    then
    \begin{equation*}
        z\notin M(\Gamma_1, s_1 + \varepsilon)\cup M(\Gamma_2, s_2 + \varepsilon).
    \end{equation*}
    However, $z\in M(\Gamma_0, s_0)$ for any $\Gamma_0\subset \Gamma_2$ open neighbourhood of $y_0$, so that $(ii)$ is not fulfilled.
\end{proof}

We end this section by summarizing our method to extract geometric information from the data in the following proposition:
\begin{proposition}\label{recov_relation}
    Let $\mathcal{S}$, $\mathcal{R}\subset M$ be open, non-empty sets and suppose that condition \eqref{spec_cond} is satisfied with the set $\mathcal{S}$. Assume that $\mathcal{R}$ has a smooth boundary. Then the source-to-solution map $\Lambda_{\mathcal{S},\mathcal{R}}$ and the smooth structure of $\overline{\mathcal{S}}\cup \overline{\mathcal{R}}$ determine the relation
    \begin{equation}\label{relation_for_distance}
        \left\{(y_0,y_1, s_0,s_1,s_2)\in \partial \mathcal{R}^2\times (0,\infty)^3: M(y_0,s_0) \subset M(y_1,s_1)\cup M(\mathcal{R},s_2)\right\}.
    \end{equation}
\end{proposition}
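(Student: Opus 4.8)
The plan is to combine the three technical ingredients assembled in the preceding sections: Blagovestchenskii's identity (Lemma \ref{B identity}), the boundedness criterion (Proposition \ref{L2boundedness}), the weak-convergence test (Lemma \ref{weak conv}), and the geometric dictionary for domain-of-influence inclusions (Lemma \ref{ver_intersection}, together with the auxiliary Lemma \ref{aux_openset_poin} that lets us pass from points $y_0,y_1\in\partial\mathcal{R}$ to small open neighbourhoods inside $\mathcal{R}$). The target relation \eqref{relation_for_distance} is phrased in terms of inclusions $M(y_0,s_0)\subset M(y_1,s_1)\cup M(\mathcal{R},s_2)$, and Lemma \ref{aux_openset_poin} reduces this, for each fixed $\varepsilon>0$, to a family of inclusions $M(\Gamma_0,s_0)\subset M(\Gamma_1,s_1+\varepsilon)\cup M(\mathcal{R},s_2+\varepsilon)$ with $\Gamma_0,\Gamma_1$ ranging over open subsets of $\mathcal{R}$ having $y_0$, resp. $y_1$, in their closure. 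Thus it suffices to show that $\Lambda_{\mathcal{S},\mathcal{R}}$ together with the smooth structure of $\overline{\mathcal{S}}\cup\overline{\mathcal{R}}$ determines, for \emph{all} open $\Gamma_0,\Gamma_1,\Gamma_2\subset\mathcal{R}$ (in fact we only need $\Gamma_2=\mathcal{R}$, but the three-set version of Lemma \ref{ver_intersection} is what we apply) and all $s_0,s_1,s_2\in(0,T)$, whether $M(\Gamma_0,s_0)\subset M(\Gamma_1,s_1)\cup M(\Gamma_2,s_2)$ holds.

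By Lemma \ref{ver_intersection} this inclusion is equivalent to the statement: for every $f_0\in C_0^\infty((T-s_0,T)\times\Gamma_0)$ there is a sequence $\{f_j\}\subset C_0^\infty\big(\bigcup_{k=1}^2(T-s_k,T)\times\Gamma_k\big)$ with $u^{f_j}(T,\cdot)\rightharpoonup u^{f_0}(T,\cdot)$ weakly in $L^2(M)$; equivalently, setting $v_j=u^{f_j}(T,\cdot)-u^{f_0}(T,\cdot)$, there is a sequence of such sources with $v_j\rightharpoonup 0$. So the whole question is reduced to: given a sequence of waves $v_j=u^{h_j}(T,\cdot)$ generated by sources $h_j$ supported in $(0,T)\times\mathcal{R}$ (the difference $f_j-f_0$ is of this form, since $\Gamma_0,\Gamma_1,\Gamma_2\subset\mathcal{R}$), decide from the data whether $v_j\rightharpoonup 0$. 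Here is where the spectral bound enters. We apply Lemma \ref{weak conv} with $\Gamma=\mathcal{S}$ — note $T$ must exceed $\max_{x\in M}d_g(x,\mathcal{S})$, which we may arrange since $T$ is at our disposal and, under \eqref{spec_cond}, $M$ is compact — so that $v_j\rightharpoonup 0$ iff (i) for all $\{\psi_m\}\subset C_0^\infty((0,T)\times\mathcal{S})$ with $\{u^{\psi_m}(T,\cdot)\}$ bounded in $L^2(M)$ the numbers $(v_j,u^{\psi_m}(T,\cdot))_{L^2(M)}$ are uniformly bounded, and (ii) $(v_j,u^{\psi}(T,\cdot))_{L^2(M)}\to 0$ for each fixed $\psi\in C_0^\infty((0,T)\times\mathcal{S})$. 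Both (i) and (ii) involve only inner products $(u^{h_j}(T,\cdot),u^{\psi_m}(T,\cdot))_{L^2(M)}$ with one source on $\mathcal{R}$ and one on $\mathcal{S}$, and these are computed from $\Lambda_{\mathcal{S},\mathcal{R}}$ via Blagovestchenskii's identity (Lemma \ref{B identity}). The only remaining non-explicit piece is recognising which sequences $\{\psi_m\}$ have $\{u^{\psi_m}(T,\cdot)\}$ bounded in $L^2(M)$: this is exactly the content of Proposition \ref{L2boundedness}, whose condition (ii) is phrased purely in terms of the eigenvalues $\lambda_j$, the spaces $E_j$, the explicitly-known functions $s_j(t)$, and the sources $\psi_m$ — all of which, as noted before Proposition \ref{L2boundedness}, are obtained from the poles and residues of the time-Fourier transform of $\Lambda_{\mathcal{S},\mathcal{R}}$.

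Assembling these: first, from the meromorphic structure of $\widehat{\Lambda_{\mathcal{S},\mathcal{R}}}$ in the time variable we recover the spectral data $\{\lambda_j\}$ and $\{E_j\}$; this makes the class $\mathcal{A}$ of admissible test sequences $\{\psi_m\}$ (those with $L^2$-bounded waves) recognisable via Proposition \ref{L2boundedness}. Second, for any candidate source sequence $\{h_j\}\subset C_0^\infty((0,T)\times\mathcal{R})$ and any $\{\psi_m\}\in\mathcal{A}$ and any single $\psi$, Lemma \ref{B identity} gives $(u^{h_j}(T,\cdot),u^{\psi_m}(T,\cdot))_{L^2(M)}$ and $(u^{h_j}(T,\cdot),u^{\psi}(T,\cdot))_{L^2(M)}$ from the data, so conditions (i) and (ii) of Lemma \ref{weak conv} are decidable; hence weak convergence $u^{h_j}(T,\cdot)\rightharpoonup 0$ is decidable. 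Third, by Lemma \ref{ver_intersection} this decides the inclusion $M(\Gamma_0,s_0)\subset M(\Gamma_1,s_1)\cup M(\Gamma_2,s_2)$ for arbitrary open $\Gamma_0,\Gamma_1,\Gamma_2\subset\mathcal{R}$ and $s_k<T$. Fourth, feeding this into Lemma \ref{aux_openset_poin} (with $\Gamma_2=\mathcal{R}$, and $\Gamma_0,\Gamma_1$ shrinking neighbourhoods of $y_0,y_1$ in $\mathcal{R}$, using the known smooth structure of $\overline{\mathcal{R}}$ to make sense of "$y_i\in\overline{\Gamma_i}$") yields, via the $\varepsilon$-quantified equivalence, the relation \eqref{relation_for_distance} on $\partial\mathcal{R}^2\times(0,\infty)^3$. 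Taking $T$ larger than every $s_0,s_1,s_2$ and than $\max_x d_g(x,\mathcal{S})$ and $\max_x d_g(x,\mathcal{R})$ throughout causes no loss since the relation is over all positive $s_i$ and a larger $T$ only enlarges the class of available sources.

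The main obstacle I anticipate is the bookkeeping around condition (i) of Lemma \ref{weak conv} interacting with Proposition \ref{L2boundedness}: one must verify that the family of "admissible" test sequences $\{\psi_m\}$ identified through the spectral data is genuinely the family of sequences with $L^2$-bounded waves, and that the uniform bound in (i) can be extracted from the data uniformly over this family — i.e. that checking (i) does not secretly require knowing the $L^2$-norms $\|u^{\psi_m}(T,\cdot)\|_{L^2(M)}$ themselves rather than just their finiteness. This is handled by the quantitative estimate inside the proof of Proposition \ref{L2boundedness}, which bounds the relevant sum by $(C_0 C_1)^2\|u^{\psi_m}(T,\cdot)\|_{L^2(M)}^2$ and conversely, so boundedness of the waves is equivalent to boundedness of a quantity visible in the data; care is needed to route this equivalence correctly. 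A secondary point is ensuring the reduction through Lemma \ref{aux_openset_poin} only ever uses sets $\Gamma_i\subset\mathcal{R}$, so that all sources live over $\mathcal{R}$ and Blagovestchenskii's identity applies verbatim; this is automatic from the statement of that lemma. Everything else is assembly of the quoted results.
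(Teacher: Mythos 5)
Your proposal is correct and follows essentially the same route as the paper's proof: reduce the point relation via Lemma \ref{aux_openset_poin} to inclusions tested through Lemma \ref{ver_intersection}, decide the resulting weak convergence of waves with sources in $\mathcal{R}$ by Lemma \ref{weak conv} with $\Gamma=\mathcal{S}$, and make that test data-computable via Lemma \ref{B identity} and Proposition \ref{L2boundedness}. Your additional remarks (choosing $T$ large enough, and noting that the differences $f_j-f_0$ are again sources over $\mathcal{R}$) only make explicit points the paper leaves implicit.
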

\begin{proof}
    For a sequences $\{\psi_k\}_{k\in \mathbb{N}} \subset C_0^\infty((0,\infty)\times\mathcal{S})$, by Proposition \ref{L2boundedness}, we can determine if the corresponding sequence $\{u^{\psi_k}(T,\cdot)\}_{k\in \mathbb{N}}$ is bounded in $L^2(M)$. Hence, by Lemmas  \ref{B identity} and \ref{weak conv}, we can determine for which sequences $\{f_k\}_{k\in \mathbb{N}} \subset C_0^\infty((0,\infty)\times\mathcal{R})$ the corresponding sequence $\{u^{f_k}(T,\cdot)\}_{k\in \mathbb{N}}$ converges weakly to zero. Therefore, due to Lemma \ref{ver_intersection}, for any choice of $\varepsilon$, $s_0$, $s_1$, $s_2>0$ and open sets $\Gamma_0$, $\Gamma_1\subset \mathcal{R}$ such that $y_0\in \overline{\Gamma_0}$ and $y_1\in \overline{\Gamma_1}$, we can verify if condition $(i)$ of Lemma \ref{ver_intersection} is fulfilled with $\Gamma_2 = \mathcal{R}$. Therefore, by Lemma \ref{aux_openset_poin}, we can determine the relation \eqref{relation_for_distance}.
\end{proof}

 \subsection{Recovery of the distance functions}
Here, we determine the distances from any point of the manifold to the boundary points $\partial\mathcal{R}$. To accomplish this, we introduce an auxiliary function: Let $\Gamma\subset M$ be an open set with smooth boundary and $\nu$ be the unit normal vector on $\partial \Gamma$ pointing into the interior of  $M\setminus\Gamma$. For $y \in \partial \Gamma$, let us define
\begin{equation*}
	\sigma_\Gamma(y) := \sup\left\{ s>0: \; d_g(\gamma(s;y,\nu), \Gamma) = s \right\},
\end{equation*}
where $\gamma(\cdot;y,\nu)$ is the geodesic with the initial data $\gamma(0;y,\nu) = y$ and $\dot{\gamma}(0;y,\nu) = \nu_y$.

We aim to recover the distances
\begin{equation}\label{distances}
	d_g(\gamma(s;y,\nu), z), 
	\qquad
	(s,y)\in \mathcal{N}_{\mathcal{R}} \text{ and } z\in \partial \mathcal{R},
\end{equation}
where
\begin{equation*}
	\mathcal{N}_{\mathcal{R}} := \left\{ (y, s) \in \partial \mathcal{R}\times (0,\infty): \; s\leq \sigma_\mathcal{R}(y)\right\}.
\end{equation*}
 In the following lemma, we show that $\sigma_{\mathcal{R}}$ can be recovered from the data.
\begin{lemma}
	Let $\Gamma \subset M$ be an open domain with a smooth boundary. Let $r>0$ and $y\in \partial \Gamma$. Then the following properties are equivalent
	\begin{itemize}
		\item[(i)] $r\leq $ $\sigma_\Gamma(y)$;
		\item[(ii)] for any $0<s<t\leq r$, $M(y,t) \not\subset M(\Gamma, s)$.
	\end{itemize}
\end{lemma}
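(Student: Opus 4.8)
The plan is to prove the two implications separately, using the auxiliary function $\sigma_\Gamma$ together with the geometry of geodesics emanating normally from $\partial\Gamma$.

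First I would establish $(i)\Rightarrow(ii)$. Suppose $r\leq\sigma_\Gamma(y)$, and fix $0<s<t\leq r$. By the definition of $\sigma_\Gamma$, for every $s'<\sigma_\Gamma(y)$ we have $d_g(\gamma(s';y,\nu),\Gamma)=s'$; in particular, taking $s'=t\leq r\leq\sigma_\Gamma(y)$ (approximating from below if $t=\sigma_\Gamma(y)$ and using continuity of $s'\mapsto d_g(\gamma(s';y,\nu),\Gamma)$), the point $p:=\gamma(t;y,\nu)$ satisfies $d_g(p,\Gamma)=t>s$. Hence $p\notin M(\Gamma,s)$. On the other hand $p\in M(y,t)$ since $d_g(p,y)\leq t$ (the curve $\gamma(\cdot;y,\nu)|_{[0,t]}$ has length $t$). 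Therefore $M(y,t)\not\subset M(\Gamma,s)$, which is $(ii)$.

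Next I would prove the contrapositive of $(ii)\Rightarrow(i)$, i.e.\ if $r>\sigma_\Gamma(y)$ then there exist $0<s<t\leq r$ with $M(y,t)\subset M(\Gamma,s)$. If $r>\sigma_\Gamma(y)$, pick $t$ with $\sigma_\Gamma(y)<t\leq r$. Any point $z\in M(y,t)$ lies at distance $\leq t$ from $y$. The key geometric fact is that $d_g(z,\Gamma)\leq d_g(z,y)$ is far too crude; instead one uses that a minimizing curve from $z$ to $y$ followed by passing into $\Gamma$ near $y$ can be shortened. More precisely, since $t>\sigma_\Gamma(y)$, the normal geodesic $\gamma(\cdot;y,\nu)$ ceases to minimize the distance to $\Gamma$ past $\sigma_\Gamma(y)$, which forces the existence of $s<t$ such that every point within distance $t$ of $y$ is within distance $s$ of $\Gamma$; one sets $s=\max_{z\in \overline{M(y,t)}} d_g(z,\Gamma)$ and must show $s<t$. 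That $s\leq t$ is immediate since $y\in\overline\Gamma$. The strict inequality is where the hypothesis $t>\sigma_\Gamma(y)$ enters: if we had $d_g(z,\Gamma)=t$ for some $z$ with $d_g(z,y)\leq t$, then necessarily $d_g(z,y)=t$ and any minimizing geodesic from $\Gamma$ to $z$ realizing $d_g(z,\Gamma)=t$ must be normal to $\partial\Gamma$ and, by the triangle inequality being saturated, must pass through $y$; that makes it the normal geodesic $\gamma(\cdot;y,\nu)$ minimizing up to parameter $t>\sigma_\Gamma(y)$, contradicting the definition of $\sigma_\Gamma(y)$ as a supremum. Hence $s<t$, and $M(y,t)\subset M(\Gamma,s)$ by construction, completing the contrapositive.

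The main obstacle is the strict inequality $s<t$ in the second implication: it requires the standard but somewhat delicate cut-locus-type argument that past the ``distance cut point'' $\sigma_\Gamma(y)$ the normal geodesic is no longer the unique or the minimizing path from $\Gamma$, so that the function $z\mapsto d_g(z,\Gamma)$ on the compact set $\overline{M(y,t)}$ stays strictly below $t$. One should also take a small margin, e.g.\ choosing $t$ with $\sigma_\Gamma(y)<t<\min(r+\sigma_\Gamma(y),\,2r)/1$ hmm — more simply, just fix any $t\in(\sigma_\Gamma(y),r]$ and then $s$ as above; compactness of $\overline{M(y,t)}$ guarantees the max is attained, so $s<t$ is a genuine strict inequality and one can even pick $s$ strictly between the max and $t$ to be safe. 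The first implication is routine given the continuity of the distance-to-$\Gamma$ function along the geodesic.
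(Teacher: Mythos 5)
Your proof is correct, and both implications rest on the same key facts as the paper's argument: for the first direction you exhibit the point $\gamma(t;y,\nu)$ (the paper uses $\gamma(t-\varepsilon;y,\nu)$ to stay strictly below $\sigma_\Gamma(y)$, which is the same idea), and for the second direction the decisive ingredient is identical, namely that a shortest path from $y$ realizing the distance to $\Gamma$ must, by smoothness of $\partial\Gamma$, leave normally and hence coincide with $\gamma(\cdot;y,\nu)$, contradicting the definition of $\sigma_\Gamma(y)$ as a supremum. The only organizational difference is that you argue the contrapositive, choosing $t\in(\sigma_\Gamma(y),r]$ and setting $s=\max_{z\in M(y,t)}d_g(z,\Gamma)$, whereas the paper argues directly: from (ii) with $t=r$ it extracts a point $x$ with $d_g(x,y)=d_g(x,\Gamma)=r$ and identifies the $y$-to-$x$ minimizer with the normal geodesic, giving $r\leq\sigma_\Gamma(y)$; the two routes are mirror images and cost about the same. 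One phrasing in your argument should be repaired: it is not true that \emph{any} minimizing geodesic from $\Gamma$ to $z$ must pass through $y$ (the foot point on $\partial\Gamma$ need not be $y$); what you need, and what suffices, is that the minimizing geodesic from $y$ to $z$ has length $d_g(z,y)=t=d_g(z,\Gamma)$, hence is itself a minimizer of the distance from $z$ to $\overline{\Gamma}$, and therefore meets $\partial\Gamma$ orthogonally at $y$ with initial velocity $\nu_y$ --- exactly the paper's formulation. With that one-line correction your argument is complete (and your remark that one may take $s$ strictly between the maximum and $t$ also disposes of the degenerate case $s=0$).
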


\begin{proof}
	Assume that $(i)$ holds. Let $\varepsilon>0$ be small enough so that $s< t - \varepsilon$. Since $t - \varepsilon < \sigma_\Gamma(y)$, we know that 
	\begin{equation*}
		d_g(\Gamma, \gamma(t - \varepsilon;y,\nu)) = t - \varepsilon,
	\end{equation*}
and hence, $\gamma(t - \varepsilon; y,\nu) \not\in M(\Gamma, s)$, but $\gamma(t - \varepsilon; y,\nu) \in M(y,t)$.

Next, assume that $(ii)$ holds. In particular,
\begin{equation*}
    M(y,r)\not\subset M(\Gamma, s), \qquad \text{for any } s<r.
\end{equation*}
Therefore, there exists $x\in M$ such that $d_g(x,y) = r$ and $x\notin M(\Gamma,s)$ for any $s<r$, or equivalently,
\begin{equation*}
    d_g(x,\Gamma) = d_g(x,y) = r.
\end{equation*}
Let $\gamma$ be a unit-speed shortest path from $y$ to $x$. Since $\partial \Gamma$ is smooth, it follows that $\dot{\gamma}(0) = \nu_y$, and hence, $\gamma(s)$ coincides with $\gamma(s;y,\nu)$. Therefore, we conclude that $\gamma(r; y,\nu) = x$ and $d_g(\gamma(r;y,\nu),y) = r$, so that $r\leq \sigma_\Gamma(y)$.
\end{proof}
Now, we will demonstrate that the distances \eqref{distances} can be determined through the function $\sigma_{\mathcal{R}}$ and the relation \eqref{relation_for_distance}.
\begin{lemma}\label{rec_dist_funct}
	Let $\Gamma \subset M$ be an open domain with a smooth boundary. Let $y\in \partial \Gamma$ and $y_1\in M$. Assume that $t>0$ and $0<s<\sigma_\Gamma(y)$, then the following properties are equivalent:
	\begin{itemize}
		\item[(i)]  $d(\gamma(s;y,\nu), y_1) \leq t$;
		\item[(ii)] For any $\varepsilon>0$ there exists $\delta > 0$ such that 
		\begin{equation*}
			M(y,s) \subset M(\Gamma, s - \delta) \cup M(y_1, t + \varepsilon).
		\end{equation*}
	\end{itemize}
\end{lemma}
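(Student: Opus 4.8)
The plan is to prove the equivalence by a geometric argument analogous to Lemma \ref{aux_openset_poin}, using the fact that $s < \sigma_\Gamma(y)$ guarantees that the geodesic segment $\gamma([0,s];y,\nu)$ is the unique shortest path realizing $d_g(\gamma(s;y,\nu),\Gamma) = s$, with $\gamma(s;y,\nu)$ lying on the boundary of $M(\Gamma,s)$.

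\textbf{Proof of $(ii)\Rightarrow(i)$.} Suppose $(i)$ fails, so $d_g(\gamma(s;y,\nu),y_1) > t$; set $\varepsilon := \tfrac14\big(d_g(\gamma(s;y,\nu),y_1) - t\big) > 0$. I claim no $\delta$ works for this $\varepsilon$. Fix any $\delta > 0$ (we may assume $\delta < \varepsilon$). Since $s < \sigma_\Gamma(y)$ and the distance function $r\mapsto d_g(\gamma(r;y,\nu),\Gamma)$ equals $r$ on $[0,\sigma_\Gamma(y))$, the point $p := \gamma(s;y,\nu)$ satisfies $d_g(p,\Gamma) = s > s - \delta$, so $p\notin M(\Gamma,s-\delta)$. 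Also $d_g(p,y_1) > t + 4\varepsilon > t + \varepsilon$, so $p\notin M(y_1,t+\varepsilon)$. On the other hand $d_g(p,y) = s$ (as $\gamma$ restricted to $[0,s]$ is a unit-speed geodesic of length $s$ from $y$ to $p$, and it is minimizing because $s < \sigma_\Gamma(y)$; indeed $d_g(p,y)\ge d_g(p,\Gamma) = s$ and the curve gives the reverse inequality). Hence $p\in M(y,s)$ but $p\notin M(\Gamma,s-\delta)\cup M(y_1,t+\varepsilon)$, contradicting $(ii)$.

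\textbf{Proof of $(i)\Rightarrow(ii)$.} Assume $d_g(p,y_1)\le t$ where $p = \gamma(s;y,\nu)$. Fix $\varepsilon > 0$; I must produce $\delta > 0$ with $M(y,s)\subset M(\Gamma,s-\delta)\cup M(y_1,t+\varepsilon)$. Let $x\in M(y,s)$, i.e. $d_g(x,y)\le s$. If $d_g(x,\Gamma) \le s - \delta$ we are done, so suppose $d_g(x,\Gamma) > s - \delta$. I want to conclude $d_g(x,y_1)\le t+\varepsilon$, which by the triangle inequality $d_g(x,y_1)\le d_g(x,p) + d_g(p,y_1)\le d_g(x,p) + t$ reduces to showing $d_g(x,p)\le\varepsilon$. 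The key geometric point: $x$ is close to $y$ (within $s$) and far from $\Gamma$ (more than $s-\delta$), while $y\in\partial\Gamma$; combined with $s<\sigma_\Gamma(y)$, this forces $x$ to be near the geodesic tip $p$. More precisely, a minimizing geodesic from $\Gamma$ to $x$ has length $> s-\delta$; using $d_g(x,y)\le s$ and $y\in\partial\Gamma$, a first-variation / exponential-map argument near the non-conjugate, minimizing segment $\gamma([0,s];y,\nu)$ shows $d_g(x,p)\to 0$ as $\delta\to 0$ uniformly; choosing $\delta$ small enough gives $d_g(x,p) < \varepsilon$. Quantitatively one argues by contradiction: if not, there are $\delta_n\downarrow 0$ and $x_n$ with $d_g(x_n,y)\le s$, $d_g(x_n,\Gamma) > s-\delta_n$, but $d_g(x_n,p)\ge\varepsilon$; by compactness $x_n\to x_\infty$ with $d_g(x_\infty,y)\le s$ and $d_g(x_\infty,\Gamma)\ge s$, hence $d_g(x_\infty,\Gamma) = s = d_g(x_\infty,y)$, so $x_\infty$ lies at distance $s$ from $\Gamma$ along a shortest path through $y$; since $s<\sigma_\Gamma(y)$ and $\partial\Gamma$ is smooth (so the shortest path from $\Gamma$ leaves normally at $y$), this path is $\gamma(\cdot;y,\nu)$, forcing $x_\infty = p$ — contradicting $d_g(x_\infty,p)\ge\varepsilon$.

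\textbf{Main obstacle.} The delicate step is the uniform estimate $d_g(x,p)\le\varepsilon$ in $(i)\Rightarrow(ii)$: one must rule out that a point slightly inside $M(y,s)$ can be far from $\Gamma$ yet not near the tip $p$. The hypothesis $s<\sigma_\Gamma(y)$ is exactly what excludes cut/conjugate phenomena and pins down $p$ as the unique maximizer; the smoothness of $\partial\Gamma$ ensures the competitor shortest path from $\Gamma$ is the inward normal geodesic. I would carry this out via the compactness/contradiction argument above rather than explicit Jacobi field bounds, as it keeps the proof self-contained and parallel in spirit to Lemmas \ref{aux_openset_poin} and the preceding $\sigma_\Gamma$-recovery lemma.
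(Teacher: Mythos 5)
Your proposal is correct and follows essentially the same route as the paper: for $(ii)\Rightarrow(i)$ you test with the tip $p=\gamma(s;y,\nu)$, which lies in $M(y,s)$ but outside every $M(\Gamma,s-\delta)$, exactly as in the paper, and for $(i)\Rightarrow(ii)$ your sequential-compactness contradiction is the same argument the paper packages via the nested sets $X_n=\overline{M(y,s)\setminus M(\Gamma,s-1/n)}$ shrinking to $\{p\}$ (your normal-geodesic/first-variation step is just a more explicit justification of the paper's claim that $\bigcap_n X_n=\{\gamma(s;y,\nu)\}$ because $\partial\Gamma$ is smooth). No gaps; the approaches coincide in substance.
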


\begin{proof}
    Assume that $(i)$ holds, $d_g(y_1, \gamma(s;y,\nu)) \leq t$. Let $\varepsilon > 0$. Then, 
    \begin{equation*}
        M(\gamma(s;y,\nu),\varepsilon) \subset M(y_1, t + \varepsilon).
    \end{equation*}
    Let us define
    \begin{equation*}
        X_n := \overline{ M(y,s) \setminus M(\Gamma,s - 1/n)}.
    \end{equation*}
    Since $\partial\Gamma$ is smooth,
    \begin{equation}\label{intersection_of_X}
        \bigcap_{n\in \mathbb{N}} X_n = \gamma(s;y,\nu).
    \end{equation}
    Let $x_n \in X_n$ be such that
    \begin{equation*}
        d_g(x_n, \gamma(s;y,\nu)) \geq d_g(x, \gamma(s;y,\nu)) \quad \text{for all } x\in X_n.
    \end{equation*}
    Note that such $x_n$ exists since $X_n$ is compact and the distance function is continuous. Let $\Tilde{x}$ be a limit point of $\{x_n\}_{n\in\mathbb{N}}$. Since $X_n$ is closed, we conclude that $\Tilde{x} \in X_n$ for all $n\in \mathbb{N}$. Then, by \eqref{intersection_of_X}, we know that $\Tilde{x} = \gamma(s;y,\nu)$. Therefore, for sufficiently large $n_\varepsilon \in \mathbb{N}$, it follows that $d_g(x_{n_\varepsilon}, \gamma(s;y,\nu)) < \varepsilon$ and 
    \begin{equation*}
        X_{n_\varepsilon} \subset M(\gamma(s;y,\nu),\varepsilon) \subset  M(y_1, t + \varepsilon).
    \end{equation*}
    Then, recalling the definition of $X_n$ gives 
    \begin{equation*}
	M(y,s) \subset M(\Gamma, s - 1/n_{\varepsilon}) \cup M(y_1, t + \varepsilon).
    \end{equation*}

    Finally, assume that $(ii)$ holds. Since $d_g(\gamma(s;y,\nu), y) = s$ and $d_g(\gamma(s;y,\nu), \Gamma) = s$, it follows that $\gamma(s;y,\nu) \in M(y, s)$ and $\gamma(s;y,\nu) \notin M(\Gamma, s - \delta)$ for any $\delta>0$. Therefore, $(ii)$ implies that $\gamma(s;y,\nu) \in M(y_1,t + \varepsilon)$ for any $\varepsilon > 0$. Hence, we conclude that $d_g(\gamma(s;y,\nu), y_1) \leq t$.
\end{proof}
The preceding two lemmas demonstrated that the distances \eqref{distances} are recovered from the provided data. We also note that
\begin{equation*}
    M\setminus\mathcal{R} = \{y\in M: (s,y) \in \mathcal{N}_\mathcal{R} \text{ for some } s>0\},
\end{equation*}
see Lemma 2.10 in \cite{KatchalovKurylevLassas}. In other words, the boundary distance function 
\begin{equation*}
    d_g(y,z), \qquad y\in M\setminus\mathcal{R}, \textbf{ and } z\in \partial \mathcal{R},
\end{equation*}
is determined for the manifold $(M\setminus\mathcal{R}, g)$ with a smooth boundary.  Subsequently, the manifold $(M\setminus\mathcal{R}, g)$ can be constructed from the boundary distance function, as explained in Section 3.8 in \cite{KatchalovKurylevLassas}. This proves Theorem \ref{main_theorem}.

\bibliographystyle{plain}
\bibliography{references}

\setlength{\parskip}{0pt}





\end{document}